\newcommand{\RR}{\mathbb R}
\newcommand{\beqn}{\begin{equation}}
\newcommand{\eeqn}{\end{equation}}
\newcommand{\bean}{\begin{eqnarray}}
\newcommand{\eean}{\end{eqnarray}}
\DeclareMathAlphabet{\mathpzc}{OT1}{pzc}{m}{it}
\newtheorem{theorem}{Theorem}[section]
\newtheorem{lemma}[theorem]{Lemma}
\newtheorem{proposition}[theorem]{Proposition}
\newtheorem{definition}[theorem]{Definition}
\numberwithin{equation}{section}
\newcommand{\id}{\mathop{\rm id}\nolimits}
\begin{document}
\title{Absence of gelation and self-similar behavior for\\ a coagulation-fragmentation equation}
\author{Philippe Lauren\c cot}
\address{Institut de Math\'ematiques de Toulouse, UMR~5219, Universit\'e de Toulouse, CNRS, F--31062 Toulouse Cedex 9, France} 
\email{laurenco@math.univ-toulouse.fr}
\author{Henry van Roessel}
\address{Department of Mathematical and Statistical Sciences, University of Alberta, Edmonton, Alberta, Canada}
\email{henry.vanroessel@ualberta.ca}

\keywords{coagulation, singular fragmentation, mass conservation, self-similarity, Laplace transform, characteristics}
\subjclass{45K05, 44A10, 35L60, 35A30, 82C22}

\date{\today}

\begin{abstract}
The dynamics of a coagulation-fragmentation equation with multiplicative coagulation kernel and critical singular fragmentation is studied. In contrast to the coagulation equation, it is proved that fragmentation prevents the occurrence of the gelation phenomenon and a mass-conserving solution is constructed. The large time behavior of this solution is shown to be described by a self-similar solution. In addition, the second moment is finite for positive times whatever its initial value. The proof relies on the Laplace transform which maps the original equation to a first-order nonlinear hyperbolic equation with a singular source term. A precise study of this equation is then performed with the method of characteristics.
\end{abstract}

\maketitle

%
%
\pagestyle{myheadings}
\markboth{\sc{Ph. Lauren\c cot \& H. van Roessel}}{\sc{Absence of gelation and self-similarity for a coagulation-fragmentation equation}}

\section{Introduction}\label{sec1}

Coagulation-fragmentation equations are mean-field models describing the growth of clusters changing their sizes under the combined effects of (binary) merging and breakup. Denoting the size distribution function of the particles with mass $x>0$ at time $t>0$ by $f=f(t,x)\ge 0$, the coagulation equation with multiple fragmentation reads
\begin{eqnarray}
\partial_t f(t,x) & = & \frac{1}{2} \int_0^x K(x-y,y) f(t,x-y) f(t,y)\ dy - \int_0^\infty K(x,y) f(t,x) f(t,y)\ dy \nonumber \\
& & -\  a(x) f(t,x) + \int_x^\infty a(y) b(x|y) f(t,y)\ dy\ , \label{i1}
\end{eqnarray}
for $(t,x)\in (0,\infty)^2$. In \eqref{i1}, $K$ denotes the coagulation kernel which is a non-negative and symmetric function $K(x,y)=K(y,x)\ge 0$ of $(x,y)$ accounting for the likelihood of two particles with respective masses $x$ and $y$ to merge. Next, $a(x)\ge 0$ denotes the overall rate of breakup of particles of size $x>0$ and $b(x|y)$ is the daughter distribution function describing the probability that the fragmentation of a particle with mass $y$ produces a particle with mass $x\in (0,y)$. Conservation of matter during fragmentation events requires
\begin{equation}
\int_0^y x b(x|y)\ dx = y\ , \qquad y\in (0,\infty)\ , \label{i2}
\end{equation}
and since the total mass of particles is preserved during coagulation events, the total mass is expected to be constant throughout time evolution, that is, 
\begin{equation}
\int_0^\infty x f(t,x)\ dx = \int_0^\infty x f(0,x)\ dx\ , \quad t>0\ . \label{i100}
\end{equation}

The total mass conservation \eqref{i100} is actually a key issue in the analysis of coagulation-fragmentation equations as it might be infringed during time evolution. More precisely, in the absence of fragmentation ($a\equiv 0$), it is by now known that the coagulation kernels $K$ split into two classes according to their growth for large values of $x$ and $y$: either 
\begin{equation}
K(x,y)\le \kappa (2+x+y)\ , \quad (x,y)\in (0,\infty)^2\ , \label{i101}
\end{equation}
for some $\kappa>0$ and solutions to the coagulation equation are expected to be \textit{mass-conserving} and satisfies \eqref{i100}. Or 
\begin{equation}
K(x,y)\ge \kappa (xy)^{\lambda/2}\ , \quad (x,y)\in (0,\infty)^2\ , \label{i102}
\end{equation} 
for some $\lambda>1$ and $\kappa>0$ and conservation of mass breaks down in finite time, a phenomenon commonly referred to as gelation. From a physical viewpoint, it corresponds to a runaway growth of the dynamics, leading to the formation of a particle with infinite mass in finite time and resulting in a loss of matter as \eqref{i1} only accounts for the evolution of particles with finite mass. In other words, there is a finite time $T_{gel}$, the so-called \textit{gelation time}, such that
$$
\int_0^\infty x f(t,x) = \int_0^\infty x f(0,x)\ dx\ , \;\; t\in [0,T_{gel}) \;\;\text{ and }\;\; \int_0^\infty x f(t,x) < \int_0^\infty x f(0,x)\ dx\ , \;\; t>T_{gel}\ .
$$ 
Alternatively,
\begin{equation*}
T_{gel} := \inf{\left\{ t>0\ :\ \int_0^\infty x f(t,x)\ dx <  \int_0^\infty x f(0,x)\ dx \right\}}\ . 
\end{equation*} 
The fact that gelation occurs for coagulation kernels satisfying \eqref{i102} was conjectured at the beginning of the eighties and supported by a few examples of explicit or closed-form solutions to \eqref{i1} \cite{LT81, Le83}. That it indeed takes place for such coagulation kernels and arbitrary initial data was shown twenty years later in \cite{EMP02}. Since fragmentation reduces the sizes of the clusters, it is rather expected to counteract gelation and it has indeed been established that strong fragmentation prevents the occurrence of gelation \cite{dC95} but does not impede the occurrence of gelation if it is too weak \cite{ELMP03, EMP02, Lt00, VZ89}. More specifically, for the following typical choices of coagulation kernel~$K$ and overall breakup rate~$a$
\begin{eqnarray}
K(x,y) & = & \kappa \left( x^\alpha y^\beta + x^\beta y^\alpha \right)\ , \quad (x,y) \in (0,\infty)\times (0,\infty)\ , \label{i3} \\
a(x) & = & k x^\gamma\ , \quad x\in (0,\infty)\ , \nonumber
\end{eqnarray}
with $\alpha \le \beta \le 1$, $\lambda:= \alpha+\beta>1$, $\gamma>0$, $\kappa>0$, and $k>0$, it is conjectured in \cite{VZ89, Pi12} that mass-conserving solutions exist when $\gamma>\lambda-1$ while gelation occurs when $\gamma<\lambda-1$, the critical case $\gamma=\lambda-1$ being more involved and possibly depending as well on the properties of the daughter distribution function $b$. Except for the critical case $\gamma=\lambda-1$, mathematical proofs of these conjectures are provided in \cite{dC95, ELMP03, EMP02} for some specific classes of daughter distribution functions $b$.

\medskip

More generally, due to the possible breakdown of mass conservation, the analysis of the existence of solutions to \eqref{i1} follows two directions since the pioneering works by Melzak \cite{Me57}, McLeod \cite{ML62a,ML62b,ML64}, White \cite{Wh80}, and Spouge \cite{Sp84}. On the one hand, several works have been devoted to the construction of mass-conserving solutions to \eqref{i1} for coagulation kernels satisfying \eqref{i101} under various assumptions on the breakup rate $a$ and the daughter distribution function $b$, see \cite{BC90, BL11, BL12, DS96b, ELMP03, La04, Lt02, MLM97, Me57, Wh80} and the references therein. On the other hand, weak solutions which need not satisfy \eqref{i100} have been constructed for large classes of coagulation kernels $K$, breakup rate $a$, and daughter distribution function $b$, see \cite{Ca06, EW01, ELMP03, EMRR05, GKW11, GLW12, Lt00, No99, St89}. Existence of mass-conserving solutions to \eqref{i1} for coagulation kernels satisfying \eqref{i102} with strong fragmentation has been established in \cite{dC95, ELMP03}. In addition, uniqueness of mass-conserving solutions has been investigated in \cite{dC95, GW11, No99, St90b}. A survey of earlier results and additional references may be found in \cite{Du94b, LM04}. Let us mention here that a common feature of the above mentioned works is that the total number of particles $n(y)$ resulting from the breakup of a cluster of size $y>0$ defined by
\begin{equation}
n(y) := \int_0^y b(x|y)\ dx\ , \label{i104}
\end{equation}
is assumed to be bounded or even constant and thus does not include the case where the breakup of particles could produce infinitely many particles. This possibility shows up in the class of breakup rates $a$ and daughter distribution functions $b$ satisfying \eqref{i2} which are derived in \cite{MZ87} and given by
\begin{equation}
a(x) = k x^\gamma\ , \quad b(x|y) = (\omega+2) \frac{x^\omega}{y^{\omega+1}}\ , \quad 0<x<y\ , \label{i4}
\end{equation} 
with $k>0$, $\gamma\in\mathbb{R}$, and $\omega\in (-2,0]$. Clearly, the total number of particles $n(y)$ resulting from the breakup of a particle of size $y>0$ defined in \eqref{i104} is finite if $\omega\in (-1,0]$ and infinite if $\omega\in (-2,-1]$, the latter being then excluded from the already available studies.

\medskip

The purpose of this note is to contribute to the analysis of the critical case when the homogeneity indices $\lambda = \alpha+\beta$ and $\gamma$ of $K$ in \eqref{i3} and $a$ in \eqref{i4} are related by $\gamma=\lambda-1$ in the particular case
\begin{equation}
\alpha=\beta=1\ , \quad \kappa = \frac{1}{2} \ , \quad \gamma=1\ , \quad \omega=-1\ , \quad k>0 \ . \label{i5}
\end{equation}
Note that the choice of $\omega$ in \eqref{i5} corresponds to the production of an infinite number of particles during each fragmentation event. With this choice of parameters, equation~\eqref{i1} reads
\begin{eqnarray}
\partial_t f(t,x) & = & \frac{1}{2} \int_0^x (x-y)y f(t,x-y) f(t,y)\ dy - \int_0^\infty xy f(t,x) f(t,y)\ dy \nonumber \\
& & -\  k x f(t,x) + k \int_x^\infty \frac{y}{x} f(t,y)\ dy\ . \label{i6}
\end{eqnarray}
At this point we realize that, introducing $\nu(t,x) := x f(t,x)$ and multiplying \eqref{i6} by $x$, an alternative formulation of \eqref{i6} reads 
\begin{eqnarray}
\partial_t \nu(t,x) & = & \frac{x}{2} \int_0^x \nu(t,x-y) \nu(t,y)\ dy - x \int_0^\infty \nu(t,x) \nu(t,y)\ dy \nonumber \\
& & -\  k x \nu(t,x) + k \int_x^\infty \nu(t,y)\ dy\ . \label{i7}
\end{eqnarray} 
One advantage of this formulation is that it cancels the possible singularity of $f(t,x)$ as $x\to 0$. More precisely, while $\nu(t)$ is expected to be integrable due to the boundedness of the total mass, it is unclear whether $f(t)$ is integrable for positive times, even if the initial total number of particles is finite. Indeed, recall that, for the fragmentation rate under consideration, an infinite number of particles is produced during each fragmentation event. Another advantage of this formulation and the choice of the rate coefficients $K$, $a$, $b$ is that taking the Laplace transform of \eqref{i6} leads us to the following partial differential equation 
\begin{equation}
\partial_t L(t,s) = (L(t,0)+k-L(t,s))\ \partial_s L(t,s) + k\  \frac{L(t,0)-L(t,s)}{s}\ , \quad t>0\ ,\ s>0\ , \label{i8} 
\end{equation}
for the Laplace transform 
$$
L(t,s) := \int_0^\infty \nu(t,x) e^{-sx}\ dx = \int_0^\infty x f(t,x) e^{-sx}\ dx\ , \quad (t,s)\in (0,\infty)^2\ ,
$$
of $\nu$. Note that $L(t,0)$ is the total mass at time $t$ and does not depend on time in the absence of gelation. Our aim is then to study the behavior of the solutions to \eqref{i8} and thereby obtain some information on the behavior of solutions to \eqref{i7} (and thus also on \eqref{i6}). To this end, it turns out that it is more appropriate to work with \textit{measure-valued} solutions to \eqref{i7}. Let $\mathfrak{M}^+$ be the space of non-negative bounded measures on $(0,\infty)$ and fix an initial condition $\nu^{in}$ satisfying
\begin{equation}
\nu^{in} \in \mathfrak{M}^+\ , \quad \int_0^\infty \nu^{in}(dx) =1\ . \label{i9}
\end{equation}
We may actually assume without loss of generality that $\nu^{in}$ is a probability measure after a suitable rescaling. Weak solutions to \eqref{i7} are then defined as follows:

\begin{definition}\label{defws}
Given an initial condition $\nu^{in}$ satisfying \eqref{i9}, a weak solution to \eqref{i7} with initial condition $\nu^{in}$ is a weakly continuous map $\nu: [0,\infty)\to \mathfrak{M}^+$ such that
\begin{eqnarray}
\int_0^\infty \vartheta(x) \nu(t,dx) & = & \int_0^\infty \vartheta(x) \nu^{in}(dx) + \int_0^t \int_0^\infty \int_0^\infty x [\vartheta(x+y) - \vartheta(x)] \nu(\tau,dx) \nu(\tau,dy)\ d\tau \nonumber \\
& & +\ k \int_0^t \int_0^\infty \left[ \int_0^x \vartheta(y)\ dy - x \vartheta(x) \right]\ \nu(\tau,dx)\ d\tau \label{i10}
\end{eqnarray}
for all $t>0$ and $\vartheta\in C([0,\infty))$ with compact support.
\end{definition}

We next denote the subset of non-negative bounded measures on $(0,\infty)$ with finite first moment by $\mathfrak{M}_1^+$. We now state the main result:

\begin{theorem}\label{thint0}
Let $\nu^{in}$ be an initial condition satisfying \eqref{i9}. There is a weak solution $\nu$ to \eqref{i7} in the sense of Definition~\ref{defws} which is mass-conserving for all times, that is, 
\begin{equation*}
\int_0^\infty \nu(t,dx) = 1 = \int_0^\infty \nu^{in}(dx)\ , \quad t>0\ . 
\end{equation*}
Moreover, $\nu(t)\in \mathfrak{M}^+_1$ for all $t>0$ and
\begin{equation}
\int_0^\infty x \nu(t,dx) \sim \frac{e^{1/k}-1}{t} \;\;\text{ as }\;\; t\to \infty\ . \label{i12}
\end{equation}
Furthermore, introducing
$$
M(t,x) := \int_0^x \nu(t,dy)\ , \quad (t,x)\in (0,\infty)\times (0,\infty)\ , 
$$ 
there is a probability measure $\nu_\star\in\mathfrak{M}^+$ such that
\begin{equation}
\lim_{t\to\infty} M\left( t , \frac{x}{t} \right) = M_\star(x) := \int_0^x \nu_\star(dy) \;\;\text{ for all }\;\; x\in (0,\infty)\ . \label{i13}
\end{equation} 
\end{theorem}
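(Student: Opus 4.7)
I would follow the hint in the introduction and work at the Laplace level. Taking the mass-conservation identity $L(t,0)=1$ as an ansatz to be verified a posteriori, equation \eqref{i8} becomes the quasilinear hyperbolic problem
$$\partial_t L \,=\, (1+k-L)\,\partial_s L \,+\, \frac{k(1-L)}{s}, \qquad (t,s)\in(0,\infty)^2, \quad L(0,\cdot)=L^{in},$$
with a source singular on the axis $\{s=0\}$. The plan is to construct $L$ by the method of characteristics, recover $\nu(t)$ via Bernstein's theorem, and read \eqref{i12}--\eqref{i13} off an explicit description of the characteristic flow.

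\textbf{Characteristics and first integral.} The characteristic system reads $dS/dt=L-1-k$, $dL/dt=k(1-L)/S$, and a direct computation shows that
$$\Phi(s,L) \,:=\, \frac{1-L}{s}\,\exp\!\Bigl(\frac{1-L}{k}\Bigr)$$
is a first integral. Parametrizing characteristics by their starting point $s_0>0$ (so $S(0)=s_0$, $L(0)=L^{in}(s_0)$) and setting $u=1-L$, the invariant reduces the flow to the scalar monotone ODE $S'=-(u(S)+k)\le -k$ with $u$ determined implicitly by $u\,e^{u/k}=(S/s_0)\,(1-L^{in}(s_0))\,e^{(1-L^{in}(s_0))/k}$; in particular each characteristic reaches $s=0$ in a finite time $T(s_0)$. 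The central technical step is to show that for every $t>0$ the map $s_0\mapsto S(t,s_0)$ is a strictly increasing $C^1$-bijection of $\{s_0:T(s_0)>t\}$ onto $(0,\infty)$, using the monotonicity of $\Phi$ to exclude characteristic crossings; this defines $L(t,\cdot)$ on $(0,\infty)$. Passing to the limit $s\to 0^+$ in the invariant gives $1-L(t,s)\to 0$, i.e.\ $L(t,0^+)=1$ (mass conservation). Since the construction propagates complete monotonicity from $L^{in}$, Bernstein's theorem produces a probability measure $\nu(t)\in\mathfrak{M}^+$ with Laplace transform $L(t,\cdot)$, and reversing the calculation that led to \eqref{i8} yields the weak formulation \eqref{i10}.

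\textbf{Asymptotics and self-similar limit.} The first moment follows from the invariant at $s=0$: if $s_0^\star(t)$ denotes the unique root of $T(s_0)=t$, then
$$\int_0^\infty x\,\nu(t,dx) \,=\, -\partial_s L(t,0^+) \,=\, \frac{(1-L^{in}(s_0^\star))\,e^{(1-L^{in}(s_0^\star))/k}}{s_0^\star} \,<\, \infty, \qquad t>0,$$
which is finite whether or not $\mu_1^{in}<\infty$. For $t\to\infty$ we have $s_0^\star(t)\to\infty$ and $L^{in}(s_0^\star)\to 0$; rescaling $r=\sigma/s_0$ in the travel-time integral gives $T(s_0)=s_0\int_0^1 dr/(U(r)+k)+o(s_0)$, where $U(r)$ is defined by $U\,e^{(U-1)/k}=r$, and the substitution $dr=e^{(U-1)/k}(1+U/k)\,dU$ reduces this to $\frac{1}{k}\int_0^1 e^{(U-1)/k}\,dU = 1-e^{-1/k}$. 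Hence $s_0^\star(t)\sim t/(1-e^{-1/k})$ and $\mu_1(t)\sim(e^{1/k}-1)/t$, proving \eqref{i12}. The same argument applied to $\tilde L(t,\tilde s):=L(t,t\tilde s)$ (rescaling $\sigma = s_0 r$ with $s_0\sim\beta(\tilde s)\,t$) yields a non-trivial limit $L_\star(\tilde s)$, completely monotone on $[0,\infty)$ with $L_\star(0)=1$; Bernstein provides a probability measure $\nu_\star$ with Laplace transform $L_\star$, and L\'evy's continuity theorem upgrades pointwise Laplace convergence to weak convergence of the rescaled measures, giving \eqref{i13}.

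\textbf{Main obstacle.} The delicate point is the monotone invertibility of $s_0\mapsto S(t,s_0)$ together with the handling of the two degenerate ends of the characteristic flow: the non-compact initial end $s_0\to\infty$, which must be reached in order to cover points close to the axis $\{s=0\}$ for large times, and the singular terminal end $s\to 0^+$, where the source $k(1-L)/s$ blows up yet must nevertheless produce both $L(t,0^+)=1$ and a finite slope $\partial_s L(t,0^+)$ at every positive time. Both require a precise qualitative analysis of $\Phi$ and of the travel time $T$; extending the construction to general $\nu^{in}\in\mathfrak{M}^+$ without moment assumptions further requires an approximation of $L^{in}$ by data for which the characteristic analysis applies directly, and an identification of the limit via a compactness/weak continuity argument.
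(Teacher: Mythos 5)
Your route is the same as the paper's: Laplace transform, the characteristic system with the first integral $(1-\ell)\,e^{(1-\ell)/k}/\Sigma$ (this is \eqref{wp7b}), finite extinction time $T(s_0)$, inversion of the characteristic map to define $L(t,\cdot)$ with $L(t,0)=1$, Bernstein's theorem to recover $\nu(t)$, and the invariant combined with the asymptotics of $T^{-1}$ for \eqref{i12} and the rescaled limit \eqref{i13}; your travel-time computation indeed reproduces \eqref{wp10b}, \eqref{wp11b} and the formula \eqref{sme0a}. However, two steps that you treat as routine are not, and the justifications you indicate do not work as stated.

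First, the "central technical step", strict monotonicity of $s_0\mapsto S(t,s_0)$: you propose to exclude characteristic crossings "using the monotonicity of $\Phi$", but $s_0\mapsto\Phi(s_0,L_0(s_0))=\frac{1-L_0(s_0)}{s_0}\,e^{(1-L_0(s_0))/k}$ is not monotone in general. For $\nu^{in}=\delta_1$ (so $L_0(s)=e^{-s}$) and $k<2$ one finds $\frac{d}{ds}\ln\Phi\to \frac1k-\frac12>0$ as $s\to 0$, while $\Phi(s)\sim e^{1/k}/s$ decreases at infinity. Moreover, at a hypothetical first crossing the invariant only forces the outer characteristic to carry the smaller $\ell$-value exactly when its invariant is the larger one, which is precisely the non-monotone configuration, so no contradiction follows from the first integral alone. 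The paper proves the no-crossing statement by differentiating the explicit formula \eqref{wp9}: one shows $\partial_t\partial_s\Sigma<0$ and that the limiting value $\tau(s)$ of $\partial_s\Sigma$ at $t=T(s)$ is positive, using $-L_0'>0$, $L_1'>0$ and $T(s)<s/(1-L_0(s))$ (Lemma~\ref{lewp3}); some argument of this strength is needed. Second, your half-sentence "the construction propagates complete monotonicity from $L^{in}$" is exactly what has to be proved before Bernstein's theorem can be applied, and it is the technical heart of the paper (Section~\ref{sec4}): one needs the complete monotonicity of $\ell(t,\cdot)$ (via Lemma~\ref{lecm3} applied to $L_1=(L_0-1)/\mathrm{id}$), and of $\partial_s\zeta(t,\cdot)$, obtained from the functional equation \eqref{wp16} through a Menon--Pego-type iteration (Lemma~\ref{lecm5}) together with the integral representation of $[tL_1-\ln(1+tL_1)]/L_1^2$ and \cite{MS01}; only then does the composition criterion give complete monotonicity of $L(t,\cdot)=\ell(t,\zeta(t,\cdot))$. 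Without this block there is no measure $\nu(t)$ at all, so the construction is incomplete. (Smaller points: the $o(s_0)$ in your travel-time expansion, the claim $L_\star(0^+)=1$, and convergence in \eqref{i13} at every $x$ also need justification; the paper gets them from the exact formulas \eqref{wp10b} and \eqref{ltb0} and Feller's extended continuity theorem, and no approximation of $\nu^{in}$ is required.)
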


More information is actually available on $\nu_\star$. Indeed, it follows from Proposition~\ref{prltb0} that the Laplace transform $L_\star$ of $\nu_\star$ is given by
\begin{equation}
L_\star(s) := 1 + s - k W\left( \frac{s}{k} e^{(1+s)/k} \right)\ , \quad s\ge 0\ , \label{i14}
\end{equation}
where $W$ is the so-called Lambert $W$-function, that is, the inverse function of $z\mapsto z e^z$ in $(0,\infty)$. 

Recalling that the gelation time is finite for all solutions to the coagulation equation \eqref{i1} in the absence of fragmentation ($k=0$), we deduce from Theorem~\ref{thint0} that adding fragmentation prevents the occurrence of gelation whatever the value of $k>0$. The value of the parameter $k$ thus plays only a minor role in that direction but it comes into play in the large time behavior as the self-similar profile \eqref{i14} depends explicitly on $k$. That a self-similar behavior for large times is plausible for rate coefficients $K$ and $a$ given by \eqref{i3} and \eqref{i4} and satisfying $\gamma=\alpha+\beta-1$ is expected from the scaling analysis performed in \cite{Pi12,VZ89}, and we show in Theorem~\ref{thint0} that this is indeed the case for the particular choice \eqref{i5}. Moreover, the convergence to zero of the first moment of $\nu$ (which corresponds to the second moment of $f$) as $t\to \infty$ gives a positive answer to a conjecture in \cite{VZ89}, providing in addition an optimal rate of convergence to zero.

\medskip

As already mentioned, the proof of Theorem~\ref{thint0} relies on the study of the Laplace transform $L$ of solutions $\nu$ to \eqref{i7} which are related to solutions $f$ of \eqref{i6} by $\nu(t,x)=x f(t,x)$. A similar technique has already been used for the coagulation equation with multiplicative kernel $K(x,y)=xy$ and without fragmentation \cite{MP04, NZ11, vRS01}. However, the fragmentation term complicates the analysis as it adds a singular reaction term in the first-order hyperbolic equation \eqref{i8} solved by the Laplace transform $L$. This additional difficulty is met again later on in the proof when the method of characteristics is used. Indeed, in contrast to the case without fragmentation, it is no longer a single ordinary differential equation which shows up in the study of characteristics but a nonlinear system of two ordinary differential equations with a singularity. To be more precise, the strategy to show that \eqref{i8} has a global solution $L$ satisfying $L(t,0)=1$ for all times $t\ge 0$ is the following: we employ the method of characteristics to establish that the equation 
$$
\partial_t \tilde{L}(t,s) = (1+k-\tilde{L}(t,s))\ \partial_s \tilde{L}(t,s) + k\  \frac{1-\tilde{L}(t,s)}{s}\ , \quad t>0\ ,\ s>0\ , 
$$
(which is nothing but \eqref{i8} where we have replaced $L(t,0)$ by $1$) has a global solution $\tilde{L}$ satisfying $\tilde{L}(t,0)=1$ for $t\ge 0$. Setting $L=\tilde{L}$ obviously gives a global solution $L$ to \eqref{i8} satisfying $L(t,0)=1$ for $t\ge 0$. As already pointed out, the method of characteristics requires a detailed analysis of a nonlinear system of two ordinary differential equations with a singularity which turns out to be rather involved and is performed in Section~\ref{sec31}. As a consequence of this analysis we obtain the existence of a global solution $L$ to \eqref{i8} satisfying $L(t,0)=1$ for $t\ge 0$ in Section~\ref{sec32}. The connection with \eqref{i7} is then made in Section~\ref{sec4} where we show that the just obtained solution $L$ to \eqref{i8} is completely monotone for all positive times and thus the Laplace transform of a probability measure. Several auxiliary results on completely monotone functions are needed for this step. The existence of a mass-conserving solution to \eqref{i7} in the sense of Definition~\ref{defws} results from the outcome of Sections~\ref{sec3} and~\ref{sec4}. Additional information can be retrieved from the detailed study of $L$ performed in Sections~\ref{sec3} and~\ref{sec4}. This allows us to identify the large time behavior of $L$ in Section~\ref{sec5} as well as the behavior of $\partial_s L(t,s)$ as $s\to 0$ in Section~\ref{sec6}, thereby providing the large time limits \eqref{i12} and \eqref{i13} stated in Theorem~\ref{thint0}.

\section{Alternative representation}\label{sec2}

Let $\nu$ be a weak solution to \eqref{i7} in the sense of Definition~\ref{defws}. Introducing its Laplace transform 
\begin{equation*}
L(t,s) := \int_0^\infty e^{-sx} \nu(t,dx)\ , \quad (t,s)\in [0,\infty)\times (0,\infty)\ , 
\end{equation*}
and observing that $x\mapsto e^{-sx}$ is bounded and continuous for $s>0$, we infer from \eqref{i10} that $L$ solves
\begin{eqnarray}
\partial_t L(t,s) & = & (L(t,0)+k-L(t,s))\ \partial_s L(t,s) + k\  \frac{L(t,0)-L(t,s)}{s}\ , \quad t>0\ ,\ s>0\ , \label{rep3} \\
L(0,s) & = & L_0(s)\ , \quad s>0\ , \label{rep4}
\end{eqnarray}
where
\begin{equation}
L_0(s) := \int_0^\infty e^{-sx} \nu^{in}(dx) \ , \quad s>0\ . \label{rep5}
\end{equation}
Introducing the characteristic equation
\begin{equation}
\frac{dS}{dt}(t) = L(t,S(t)) - L(t,0) - k\ , \label{rep6}
\end{equation}
we infer from \eqref{rep3} that
\begin{equation}
\frac{d}{dt} L(t,S(t)) = k\ \frac{L(t,0)-L(t,S(t))}{S(t)}\ . \label{rep7}
\end{equation}
Consequently, $t\mapsto (S(t),L(t,S(t))$ solves the differential system \eqref{rep6}-\eqref{rep7} which has a singularity when $S(t)$ vanishes and is not closed as it features the yet unknown time dependent function $L(t,0)$. Nevertheless, as long as the total mass is conserved, it follows from \eqref{i7} that $L(t,0)=1$ and the differential system \eqref{rep6}-\eqref{rep7} is closed and can in principle be solved. 

\section{Well-posedness}\label{sec3}

We first list some useful properties of the Laplace transform $L_0$ of $\nu^{in}$ defined in \eqref{rep5}. Owing to \eqref{i7}, $L_0\in C([0,\infty))\cap C^\infty((0,\infty))$ and satisfies
\begin{subequations}
\begin{eqnarray}
& & L_0'(s) = - \int_0^\infty x e^{-sx} \nu^{in}(dx) \;\text{ and }\; 0 < L_0(s) < 1\ , \quad s>0\ , \label{wp0a} \\
& & s L_0'(s) + 1 - L_0(s) = \int_0^\infty \left( e^{sx} - 1- sx \right) e^{-sx} \nu^{in}(dx)  > 0\ , \quad s>0\ , \label{wp0b}
\end{eqnarray}
\end{subequations}
the second statement being a consequence of the elementary inequality $e^{sx}\ge 1+sx$ for $x>0$ and $s>0$. For further use, we also define
\begin{equation}
L_1(s) := \frac{L_0(s)-1}{s}<0 \;\;\text{ with }\;\; L_1'(s) = \frac{s L_0'(s) + 1 - L_0(s)}{s^2}>0\ , \quad s>0\ , \label{wp0c}
\end{equation}
the positivity properties of $-L_1$ and $L_1'$ being a consequence of \eqref{wp0a} and \eqref{wp0b}.

\subsection{An auxiliary differential system}\label{sec31}

According to the previous discussion, we focus in this section on the following initial value problem: given $s>0$,
\begin{eqnarray}
\frac{d\Sigma}{dt}(t) & = & \ell(t) - 1 - k\ , \label{wp1} \\
\frac{d\ell}{dt}(t) & = & k\ \frac{1-\ell(t)}{\Sigma(t)}\ , \label{wp2} \\
(\Sigma,\ell)(0) & = & (s,L_0(s))\ . \label{wp3} 
\end{eqnarray}
We infer from the Cauchy-Lipschitz theorem that there is a unique maximal solution $(\Sigma,\ell)(\cdot,s)\in C([0,T(s));\mathbb{R	}^2)$ to \eqref{wp1}-\eqref{wp3} such that
\begin{equation}
\Sigma(t,s)>0\ , \quad t\in [0,T(s))\ . \label{wp4}
\end{equation}
In addition, 
\begin{equation}
T(s)<\infty \iff \lim_{t\to T(s)} \Sigma(t,s)=0 \;\text{ or }\; \lim_{t\to T(s)} \left( \Sigma(t,s) + |\ell(t,s)| \right) = \infty\ . \label{wp5}
\end{equation}
Since $L_0(s)\in (0,1)$ by \eqref{wp0a}, a first consequence of \eqref{wp2} and the comparison principle is that $\ell(t,s)<1$ for $t\in [0,T(s))$. This fact and \eqref{wp4} ensure that the right hand side of \eqref{wp2} is then positive, hence
\begin{equation}
\partial_t \ell(t,s)>0 \;\;\text{ and }\;\; 0 \le \ell(t,s) < 1\ , \quad t\in [0,T(s))\ . \label{wp6a}
\end{equation}
We then deduce from \eqref{wp1}, \eqref{wp6a}, and the positivity of $k$ that
\begin{equation}
\partial_t \Sigma(t,s) < -k < 0 \;\text{ and }\; \Sigma(t,s)\le s\ , \quad t\in [0,T(s))\ . \label{wp6b}
\end{equation}
Two interesting consequences can be drawn from the estimates \eqref{wp6a} and \eqref{wp6b}: they clearly exclude the occurrence of finite time blowup and imply that $\Sigma(\cdot,s)$ vanishes at a finite time. Recalling \eqref{wp5}, we conclude that 
$$
T(s)< \infty \;\text{ and }\; \lim_{t\to T(s)} \Sigma(t,s)=0\ .
$$

In fact, $\Sigma(\cdot,s)$, $\ell(\cdot,s)$ and $T(s)$ can be computed explicitly as we show now. Owing to \eqref{wp4} and \eqref{wp6a}, the function $\ln{\Sigma} - \ln{(1-\ell)} + (\ell/k)$ is well-defined on $[0,T(s))$ and we infer from \eqref{wp1} and \eqref{wp2} that
$$
\frac{d}{dt} \left( \ln{\Sigma} - \ln{(1-\ell)} + \frac{\ell}{k} \right)(t,s) = 0\ , \quad t\in [0,T(s)\ .
$$
Therefore
\begin{subequations}
\begin{equation}
\Sigma(t,s) = s\ \frac{1-\ell(t,s)}{1-L_0(s)}\ e^{(L_0(s)-\ell(t,s))/k}\ , \quad t\in [0,T(s))\ , \label{wp7a}
\end{equation}
or, alternatively,
\begin{equation}
\frac{1-\ell(t,s)}{\Sigma(t,s)} = \frac{1-L_0(s)}{s} e^{(\ell(t,s)-L_0(s))/k}\ , \quad t\in [0,T(s))\ . \label{wp7b}
\end{equation} \label{wp7}
\end{subequations}
Inserting \eqref{wp7b} in \eqref{wp2} gives 
\begin{equation}
\frac{d\ell}{dt}(t,s)  = k\ \frac{1-L_0(s)}{s}\ e^{-L_0(s)/k} e^{\ell(t,s)/k}\ , \quad t\in [0,T(s))\ ,
\label{wp8b}
\end{equation}
whence, after integration,
\begin{equation}
\ell(t,s) = L_0(s) - k \ln{(1+tL_1(s))}\ , \quad t\in [0,T(s))\ , \label{wp8}
\end{equation}
the function $L_1$ being defined in \eqref{wp0c}. We next combine \eqref{wp7a} and \eqref{wp8} to obtain 
\begin{equation}
\Sigma(t,s) = -\frac{1+t L_1(s)}{L_1(s)}\ \left[ 1 - L_0(s) + k\ \ln{(1+tL_1(s))} \right]\ , \quad t\in [0,T(s))\ . \label{wp9}
\end{equation}
The first term of the right hand side of \eqref{wp9} vanishes when $t=-1/L_1(s)=s/(1-L_0(s))$ while the second term is a decreasing function of time (recall that $L_0(s)<1$) and ranges in $(-\infty,1-L_0(s)]$ when $t$ ranges in $[0,-1/L_1(s))$. It thus vanishes only once in $[0,-1/L_1(s))$ and since we have already excluded finite time blowup, we conclude that $T(s)$ is the unique zero in $[0,-1/L_1(s))$ of the second term of the right hand side of \eqref{wp9}, that is, $T(s)$ solves
\begin{subequations}
\begin{equation}
1 - L_0(s) + k\ \ln{(1+ T(s) L_1(s))} = 0\ , \label{wp10a}
\end{equation}
or, alternatively,
\begin{equation}
T(s) = \frac{s}{1-L_0(s)}\ \left( 1 - e^{(L_0(s)-1)/k} \right) < \frac{s}{1-L_0(s)}\ . \label{wp10b} 
\end{equation}
\end{subequations}
The last bound implies in particular that
\begin{equation}
s(1+t L_1(s)) = s + (L_0(s)-1) t > s e^{(L_0(s)-1)/k} > 0\ , \quad t\in [0,T(s))\ . \label{wp10c}
\end{equation}

Recalling that the differential system \eqref{wp3}-\eqref{wp4} features a singularity as $t\to T(s)$, let us investigate further the behavior of $(\Sigma,\ell)(t,s)$ as $t\to T(s)$.

\begin{lemma}\label{lewp1b}
The functions $\Sigma(\cdot,s)$ and $\ell(\cdot,s)$ both belong to $C^1([0,T(s)])$ with
\begin{align*}
\Sigma(T(s),s) = 0\ , & \quad \ell(T(s),s) = 1\ , \\ 
\partial_t \Sigma(T(s),s) = - k\ , & \quad \partial_t \ell(T(s),s) = - k L_1(s) e^{(L_0(s)-1)/k}\ . 
\end{align*}
\end{lemma}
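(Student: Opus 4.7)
The plan is to exploit the explicit representations \eqref{wp8} and \eqref{wp9} for $\ell(\cdot,s)$ and $\Sigma(\cdot,s)$. Since $L_1(s)<0$, both formulas define smooth functions of $t$ on the open interval $[0,-1/L_1(s))$, and the strict inequality $T(s)<-1/L_1(s)$ provided by \eqref{wp10b} guarantees that the two functions extend smoothly (in particular with $C^1$ regularity) to the closed interval $[0,T(s)]$. The apparent singularity of the right-hand side of \eqref{wp2} as $t\to T(s)$ is in fact illusory: comparing \eqref{wp9} with \eqref{wp8} yields the factorization
\begin{equation*}
\Sigma(t,s)=-\frac{1+tL_1(s)}{L_1(s)}\bigl(1-\ell(t,s)\bigr),
\end{equation*}
which shows that $1-\ell(\cdot,s)$ and $\Sigma(\cdot,s)$ vanish together and proportionally at $T(s)$, so the quotient in \eqref{wp2} has a harmless removable singularity.

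To read off the boundary values I would use the defining identity \eqref{wp10a} in its exponentiated form $1+T(s)L_1(s)=e^{(L_0(s)-1)/k}$. Inserting this into \eqref{wp8} gives $\ell(T(s),s)=L_0(s)-(L_0(s)-1)=1$, while the bracketed term in \eqref{wp9} vanishes by \eqref{wp10a}, producing $\Sigma(T(s),s)=0$.

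For the derivatives at $T(s)$, the ODE \eqref{wp1} itself supplies the cleanest expression, $\partial_t\Sigma(t,s)=\ell(t,s)-1-k$, which is continuous on $[0,T(s)]$ and evaluates to $-k$ at $t=T(s)$ since $\ell(T(s),s)=1$. Direct differentiation of \eqref{wp8} gives $\partial_t\ell(t,s)=-kL_1(s)/(1+tL_1(s))$, and one more application of the exponentiated form of \eqref{wp10a} yields the stated value at $t=T(s)$. Continuity of $\partial_t\Sigma(\cdot,s)$ and $\partial_t\ell(\cdot,s)$ on $[0,T(s)]$ then follows again from the smoothness of the explicit formulas on the larger interval $[0,-1/L_1(s))$, which settles $\Sigma(\cdot,s),\ell(\cdot,s)\in C^1([0,T(s)])$. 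There is no serious obstacle in this argument; the only conceptual point is to realize that the singularity in \eqref{wp2} is removable, and this is entirely transparent from the explicit expressions \eqref{wp8}-\eqref{wp9}.
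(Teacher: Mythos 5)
Your proposal is correct and takes essentially the same route as the paper: the boundary values $\Sigma(T(s),s)=0$ and $\ell(T(s),s)=1$ are read off from the explicit formulas \eqref{wp8}--\eqref{wp9} combined with \eqref{wp10a}, the value $\partial_t\Sigma(T(s),s)=-k$ comes from \eqref{wp1}, and the derivative of $\ell$ at $T(s)$ comes from differentiating \eqref{wp8} (the paper uses the equivalent relation \eqref{wp8b}); your observation that $T(s)<-1/L_1(s)$ by \eqref{wp10b} makes the formulas smooth across $t=T(s)$, so the singularity in \eqref{wp2} is removable, is exactly the mechanism behind the paper's terse argument. One caveat: if you actually carry out your last step, $\partial_t\ell(T(s),s)=-kL_1(s)/\bigl(1+T(s)L_1(s)\bigr)=-kL_1(s)\,e^{(1-L_0(s))/k}$, whose exponent has the opposite sign to the one displayed in the lemma; this appears to be a typo in the statement itself (the paper's route via \eqref{wp8b} gives the same value as yours), so you should state the computed value explicitly rather than asserting without computation that it agrees with the displayed formula.
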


\begin{proof}
The behavior of $\Sigma(t,s)$ and $\ell(t,s)$ as $t\to T(s)$ is a straightforward consequence of \eqref{wp8}, \eqref{wp9}, and \eqref{wp10a}. Next, it readily follows from \eqref{wp1} that
$$
\lim_{t\to T(s)} \partial_t \Sigma(t,s) = -k \ .
$$
Consequently, $\Sigma(\cdot,s)\in C^1([0,T(s)])$ and $\ell(\cdot, s)$ shares the same regularity thanks to a similar argument relying on \eqref{wp8b}.
\end{proof}

We now study the behavior of $T(s)$ as a function of $s>0$ and gather the outcome of the analysis in the next lemma.

\begin{lemma}\label{lewp2}
The function $T$ is an increasing $C^\infty$-smooth diffeomorphism from $(0,\infty)$ onto $(0,\infty)$ and enjoys the following properties:
\begin{subequations}
\begin{eqnarray}
& & \lim_{s\to 0} T(s) = 0 \;\text{ and }\; T(s) \sim \frac{s}{k} \;\text{ as } s \to 0\ , \label{wp11a} \\
& & \lim_{s\to \infty} T(s) = \infty \;\text{ and }\; T(s) \sim (1- e^{-1/k}) s \;\text{ as } s \to \infty\ . \label{wp11b}
\end{eqnarray}
\end{subequations}
\end{lemma}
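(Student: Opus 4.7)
The plan is to read off every assertion of the lemma directly from the two equivalent characterisations of $T(s)$ obtained above, namely the explicit formula \eqref{wp10b} and the implicit relation \eqref{wp10a}, combined with the qualitative information on $L_0$ and $L_1$ already collected in \eqref{wp0a}--\eqref{wp0c}.

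\emph{Smoothness.} Since $\nu^{in}$ is supported in $(0,\infty)$, standard properties of the Laplace transform give $L_0\in C^\infty((0,\infty))$, and $0<L_0(s)<1$ for $s>0$ by \eqref{wp0a}. Consequently both factors $s/(1-L_0(s))$ and $1-e^{(L_0(s)-1)/k}$ in \eqref{wp10b} are $C^\infty$ on $(0,\infty)$, which gives $T\in C^\infty((0,\infty))$.

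\emph{Asymptotics.} For $s\to 0$, I would use dominated/monotone convergence to get $L_0(s)\to 1$, set $\varepsilon(s):=1-L_0(s)\to 0^+$, and expand
\[
1-e^{-\varepsilon(s)/k} = \frac{\varepsilon(s)}{k}\bigl(1+O(\varepsilon(s))\bigr),
\]
so that \eqref{wp10b} gives $T(s)=\frac{s}{k}(1+O(1-L_0(s)))\sim s/k$; note that this argument does \emph{not} require any moment assumption on $\nu^{in}$. For $s\to\infty$, dominated convergence gives $L_0(s)\to 0$, and plugging this directly into \eqref{wp10b} yields $T(s)\sim(1-e^{-1/k})s$.

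\emph{Monotonicity and diffeomorphism.} This is the step that requires a genuine sign computation. I would view \eqref{wp10a} as $F(s,T(s))=0$ with
\[
F(s,T):=1-L_0(s)+k\ln\bigl(1+T L_1(s)\bigr),
\]
and apply the implicit function theorem. Computing
\[
\partial_T F(s,T(s)) = \frac{k L_1(s)}{1+T(s)L_1(s)} = k L_1(s)\,e^{(1-L_0(s))/k}
\]
using $1+T(s)L_1(s)=e^{(L_0(s)-1)/k}$ from \eqref{wp10a}, I see that $\partial_T F(s,T(s))<0$ since $L_1(s)<0$ by \eqref{wp0c}. Similarly,
\[
\partial_s F(s,T(s)) = -L_0'(s) + k T(s) L_1'(s)\,e^{(1-L_0(s))/k} > 0,
\]
because $-L_0'(s)>0$ by \eqref{wp0a}, $T(s)>0$, and $L_1'(s)>0$ by \eqref{wp0c}. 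Hence $T'(s)=-\partial_s F/\partial_T F>0$ for every $s>0$. Together with the asymptotics just obtained and the intermediate value theorem, $T$ is a strictly increasing continuous bijection from $(0,\infty)$ onto $(0,\infty)$, and since $T'$ never vanishes the inverse function theorem upgrades this to a $C^\infty$-diffeomorphism.

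The main (mild) obstacle is the monotonicity step: it is tempting to try to differentiate \eqref{wp10b} directly, but the resulting expression is unwieldy, whereas the implicit formulation \eqref{wp10a} reduces the argument to the already-established positivity of $-L_1$, $L_1'$, and $-L_0'$.
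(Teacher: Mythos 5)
Your proposal is correct and follows essentially the same route as the paper: smoothness and the asymptotics \eqref{wp11a}--\eqref{wp11b} are read off from \eqref{wp10b} and the properties of $L_0$, while monotonicity is obtained by implicitly differentiating \eqref{wp10a} and using $1+T(s)L_1(s)=e^{(L_0(s)-1)/k}$ together with the signs of $-L_0'$, $-L_1$, and $L_1'$. Your implicit-function-theorem packaging is just a slight reformulation of the paper's direct differentiation of \eqref{wp10a}, and your sign computation matches theirs.
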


\begin{proof}
The smoothness of $T$ readily follows from that of $L_0$ and \eqref{wp0a} by \eqref{wp10b} while \eqref{wp11a} and \eqref{wp11b} are consequences of \eqref{wp10b} and the properties of $L_0$. To establish the monotonicity of $T$, we  differentiate \eqref{wp10a} with respect to $s$ and find 
$$
-L_0'(s) + k\ \frac{1 + L_1'(s) T(s) + L_1(s) T'(s)}{1 + L_1(s) T(s)} = 0\ .
$$
Since $1+L_1(s) T(s) = e^{(L_0(s)-1)/k}$ by \eqref{wp10a}, we obtain
$$
- L_0'(s) e^{(L_0(s)-1)/k} + k \left( 1+ L_1'(s) T(s) + L_1(s) T'(s) \right) = 0\ ,
$$
and then
$$
- k L_1(s) T'(s) = - L_0'(s) e^{(L_0(s)-1)/k} + k ( 1+ L_1'(s) T(s))\ .
$$
Since $-L_0'$ and $L_1'$ are both positive by \eqref{wp0a} and \eqref{wp0c}, we conclude that
\begin{equation*}
T'(s)>0\ , \quad s>0\ , 
\end{equation*}
which, together with \eqref{wp11a} and \eqref{wp11b}, implies that $T$ is an increasing $C^\infty$-smooth diffeomorphism from $(0,\infty)$ onto $(0,\infty)$ and completes the proof.
\end{proof}

Denoting the inverse of $T$ by $T^{-1}$, we deduce from \eqref{wp10a} that
\begin{equation}
1 = L_0(T^{-1}(t)) - k \ln{(1+t L_1(T^{-1}(t))}\ , \quad t>0\ . \label{wp10d}
\end{equation}

We next turn to the properties of $\Sigma$ with respect to the variable $s$ and establish the following monotonicity result:

\begin{lemma}\label{lewp3}
Let $t>0$. The function $\Sigma(t,\cdot)$ is an increasing $C^\infty$-smooth diffeomorphism from $(T^{-1}(t),\infty)$ onto $(0,\infty)$ and satisfies 
\begin{equation}
\partial_s \Sigma (t,s)> 0 \;\;\text{ for }\;\; s\in (T^{-1}(t),\infty) \;\;\text{ and }\;\; \Sigma(t,s)\sim s \;\;\text{ as }\;\; s\to \infty\ . \label{wp14}
\end{equation}
\end{lemma}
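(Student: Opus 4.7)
The plan is to work directly with the explicit formula \eqref{wp9}, which factors cleanly as $\Sigma(t,s) = P(s)Q(s)$ with
$$P(s) := -\frac{1+tL_1(s)}{L_1(s)} = -\frac{1}{L_1(s)} - t, \qquad Q(s) := 1 - L_0(s) + k\ln(1+tL_1(s)).$$
Lemma~\ref{lewp2} together with \eqref{wp10c} guarantees that $1+tL_1(s)>0$ throughout $[T^{-1}(t),\infty)$, so $\ln(1+tL_1(s))$ is well-defined and both $P$ and $Q$ inherit the $C^\infty$ regularity of $L_0$ and $L_1$ on $(0,\infty)$.

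Next I would check the sign and monotonicity of each factor. Since $L_1<0$ and $1+tL_1>0$, we have $P>0$, and the direct computation yields $P'(s)=L_1'(s)/L_1(s)^2>0$ by \eqref{wp0c}. As for $Q$, the defining equation \eqref{wp10a} of $T$ gives $Q(T^{-1}(t))=0$, while
$$Q'(s) = -L_0'(s) + \frac{k t L_1'(s)}{1+tL_1(s)} > 0$$
is a sum of two strictly positive terms (by \eqref{wp0a}, \eqref{wp0c}, and \eqref{wp10c}). Consequently $Q(s)>0$ for $s>T^{-1}(t)$, and therefore
$$\partial_s \Sigma(t,s) = P'(s)Q(s) + P(s)Q'(s) > 0 \quad\text{for all } s \in (T^{-1}(t),\infty).$$

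Finally I would settle the boundary behavior and conclude. At the left endpoint, $\Sigma(t,T^{-1}(t)) = P(T^{-1}(t))\cdot 0 = 0$, consistent with Lemma~\ref{lewp1b}. For the asymptotic as $s\to\infty$, dominated convergence applied to $L_0(s)=\int_0^\infty e^{-sx}\nu^{in}(dx)$ gives $L_0(s)\to 0$ (since $\nu^{in}$ is a probability measure on $(0,\infty)$), whence $L_1(s)=(L_0(s)-1)/s\to 0$, $Q(s)\to 1$, and $-1/L_1(s) = s/(1-L_0(s)) \sim s$, so that $\Sigma(t,s) = P(s)Q(s) \sim s$. Combining strict positivity of $\partial_s\Sigma$ with $C^\infty$ smoothness and the limits $0$ and $+\infty$ at the endpoints, we conclude that $\Sigma(t,\cdot)$ is an increasing $C^\infty$ diffeomorphism from $(T^{-1}(t),\infty)$ onto $(0,\infty)$.

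The proof has essentially no technical obstacle once the factorization $\Sigma=PQ$ is adopted; the only mildly clever step is writing $P=-1/L_1-t$, which makes the derivative $P'=L_1'/L_1^2$ immediate and automatically positive, bypassing any delicate cancellation one would face differentiating \eqref{wp9} in its original form.
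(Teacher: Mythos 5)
Your argument is correct, and it takes a genuinely different (and shorter) route than the paper. You exploit the product structure of \eqref{wp9}, writing $\Sigma(t,s)=P(s)Q(s)$ with $P=-1/L_1-t>0$, $P'=L_1'/L_1^2>0$, and $Q$ vanishing at $s=T^{-1}(t)$ (by \eqref{wp10a}) with $Q'=-L_0'+ktL_1'/(1+tL_1)>0$ on $(T^{-1}(t),\infty)$ thanks to \eqref{wp0a}, \eqref{wp0c}, and \eqref{wp10c}; the positivity of $\partial_s\Sigma=P'Q+PQ'$ is then immediate, and the endpoint limits ($0$ at $T^{-1}(t)$, $\sim s$ at infinity, the latter from $L_0(s)\to0$ and $L_1(s)\to0$) give the diffeomorphism onto $(0,\infty)$ exactly as in the paper. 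The paper instead differentiates \eqref{wp9} directly to get the formula \eqref{wp13} for $\partial_s\Sigma$, shows $\partial_t\partial_s\Sigma<0$ so that $\partial_s\Sigma(t,s)$ is bounded below by its value $\tau(s)$ at $t=T(s)$, and then verifies $\tau(s)>0$ using \eqref{wp10a} and \eqref{wp10b}. Your factorization avoids both the mixed-derivative monotonicity step and the computation of $\tau(s)$, so it is more elementary; what the paper's route buys is the explicit expression \eqref{wp13} for $\partial_s\Sigma(t,s)$, which is reused later (in Lemma~\ref{lewp4} and in the proof of Proposition~\ref{prsme0} to evaluate $\partial_s\Sigma(t,T^{-1}(t))$), together with the strict lower bound $\partial_s\Sigma(t,s)>\tau(s)$. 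One cosmetic remark: at the closed endpoint $s=T^{-1}(t)$ the positivity $1+tL_1(T^{-1}(t))=e^{(L_0(T^{-1}(t))-1)/k}>0$ comes from \eqref{wp10a} (equivalently \eqref{wp10d}) rather than from \eqref{wp10c}, which is stated for $t\in[0,T(s))$; this is exactly what you use anyway when you set $Q(T^{-1}(t))=0$, so nothing is missing.
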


\begin{proof}
Let $s>0$ and $t\in (0,T(s))$. We differentiate \eqref{wp9} with respect to $s$ and obtain
\begin{align*}
\partial_s \Sigma(t,s) = & - \frac{1+t L_1(s)}{L_1(s)} \left[ -L_0'(s) + kt\ \frac{L_1'(s)}{1+t L_1(s)} \right] \\
+ & \frac{L_1'(s)}{L_1(s)^2}\ \left( 1 - L_0(s) + k \ln{ (1+t L_1(s))} \right) \\
= & \frac{L_0'(s)}{L_1(s)} + t L_0'(s) - kt \frac{L_1'(s)}{L_1(s)} - \frac{s L_1'(s)}{L_1(s)} + k\ \frac{L_1'(s)}{L_1(s)^2} \ln{(1+t L_1(s))} \ ,
\end{align*}
\begin{equation}
\partial_s \Sigma(t,s) = 1 + t L_0'(s) + k\ \frac{L_1'(s)}{L_1(s)^2} \ \left[ \ln{(1+t L_1(s))} - t L_1(s) \right]\ . \label{wp13}
\end{equation}
Differentiating \eqref{wp13} with respect to $t$ gives
\begin{align*}
\partial_t \partial_s \Sigma(t,s) = & L_0'(s) + k\ \frac{L_1'(s)}{L_1(s)^2}\ \left[ \frac{L_1(s)}{1 + t L_1(s)} - L_1(s) \right] \\
= & L_0'(s) - kt\ \frac{L_1'(s)}{1 + t L_1(s))}< 0
\end{align*}
for $t\in [0,T(s))$, the negativity of the right hand side of the above inequality being a consequence of \eqref{wp0a}, \eqref{wp0c}, and \eqref{wp10c}. Therefore, for all $t\in [0,T(s))$, 
\begin{equation}
\partial_s \Sigma(t,s) > \tau(s) := \lim_{t\to T(s)} \partial_s \Sigma(t,s)\ , \label{wp13b}
\end{equation} 
where 
$$
\tau(s) = 1 + T(s) L_0'(s) + k\ \frac{L_1'(s)}{L_1(s)^2} \ \left[ \ln{(1+T(s) L_1(s))} - L_1(s) T(s) \right]\ .
$$
Since 
$$
\ln{(1+T(s) L_1(s))} = \frac{L_0(s)-1}{k} = \frac{s L_1(s)}{k}
$$
by \eqref{wp10a}, we realize that
\begin{align*}
\tau(s) = & 1 + T(s) L_0'(s) + k\ \frac{L_1'(s)}{L_1(s)} \ \left[ \frac{s}{k} - T(s) \right] \\
= & L_0'(s)\ \left( T(s) - \frac{s}{1-L_0(s)} \right) - k\ \frac{L_1'(s)}{L_1(s)} \ T(s) > 0\ ,
\end{align*}
due to \eqref{wp0a}, \eqref{wp0c}, and \eqref{wp10b}. Recalling \eqref{wp13b}, we have shown that $\partial_s \Sigma(t,s)>0$ for $t\in (0,T(s))$ and $s>0$ or equivalently for $s\in (T^{-1}(t),\infty)$ and $t>0$. Consequently, for each $t>0$, $\Sigma(t,\cdot)$ is an increasing $C^\infty$-smooth diffeomorphism from $(T^{-1}(t),\infty)$ onto its range which is nothing but $(0,\infty)$ since $\Sigma(t,s)\sim s$ as $s\to\infty$ by \eqref{wp9} and $\Sigma(t,s)\to 0$ as $s\to T^{-1}(t)$ by Lemma~\ref{lewp1b}. The proof of Lemma~\ref{lewp3} is then complete.
\end{proof}

For $t>0$, we denote the inverse of $\Sigma(t,\cdot)$ by $\zeta(t,\cdot)$ and observe that it is a $C^\infty$-smooth increasing function from $(0,\infty)$ onto $(T^{-1}(t),\infty)$. Since 
\begin{equation}
\Sigma(t,\zeta(t,s))=s \;\text{ for }\; (t,s)\in (0,\infty)^2\ , \label{wp100}
\end{equation} 
we infer from \eqref{wp14}, \eqref{wp100}, and the implicit function theorem that $\zeta\in C^\infty((0,\infty)^2)$ with
\begin{equation}
\partial_s \zeta(t,s) = \frac{1}{\partial_s \Sigma(t,\zeta(t,s))} \;\text{ and }\; \partial_t \zeta(t,s) = - \frac{\partial_t \Sigma(t,\zeta(t,s))}{\partial_s \Sigma(t,\zeta(t,s))}\ , \quad (t,s)\in (0,\infty)^2\ . \label{wp101}
\end{equation}

We end up this section with the differentiability of $\Sigma(t,\cdot)$ and $\ell(t,\cdot)$ at $s=T^{-1}(t)$.

\begin{lemma}\label{lewp4}
Let $t>0$. Both $\Sigma(t,\cdot)$ and $\ell(t,\cdot)$ belong to $C^1\left( [T^{-1}(t),\infty) \right)$.
\end{lemma}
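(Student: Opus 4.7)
The plan is to exploit the explicit formulas \eqref{wp8} for $\ell(t,s)$ and \eqref{wp9} for $\Sigma(t,s)$, which by Lemma~\ref{lewp1b} remain valid up to $t = T(s)$, equivalently (when $t$ is held fixed) up to $s = T^{-1}(t)$. Together with Lemma~\ref{lewp3}, these formulas already yield that $\Sigma(t,\cdot)$ and $\ell(t,\cdot)$ are continuous on $[T^{-1}(t), \infty)$ with $\Sigma(t, T^{-1}(t)) = 0$, $\ell(t, T^{-1}(t)) = 1$, and are $C^\infty$ on $(T^{-1}(t), \infty)$. The only point to settle is therefore the right-differentiability at $s = T^{-1}(t)$.

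First, I would differentiate \eqref{wp8} with respect to $s$ to obtain
$$
\partial_s \ell(t,s) = L_0'(s) - k\, \frac{t L_1'(s)}{1 + t L_1(s)},
$$
and recall the companion formula \eqref{wp13} for $\partial_s \Sigma(t,s)$, both valid on $(T^{-1}(t), \infty)$. Then I would verify that the right-hand sides of these two expressions extend continuously to $s = T^{-1}(t)$. The decisive observation is that $T^{-1}(t) > 0$ by Lemma~\ref{lewp2}, so $L_0, L_0', L_1, L_1'$ are all smooth in a neighbourhood of $T^{-1}(t)$, the factor $L_1$ does not vanish there (it is negative by \eqref{wp0c}), and the only candidate singular denominator $1 + t L_1(s)$ evaluated at $s = T^{-1}(t)$ equals $e^{(L_0(T^{-1}(t))-1)/k} > 0$ thanks to identity \eqref{wp10a}. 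Consequently $\partial_s \ell(t,s)$ and $\partial_s \Sigma(t,s)$ admit finite limits as $s \to T^{-1}(t)^+$.

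To conclude, I would appeal to the standard consequence of the mean value theorem: if a function is continuous on a closed half-interval, $C^1$ on its interior, and its derivative has a finite limit at the endpoint, then the one-sided derivative exists at that endpoint and equals the limit. Applying this to $\Sigma(t,\cdot)$ and $\ell(t,\cdot)$ on $[T^{-1}(t), \infty)$ yields $C^1$ regularity up to $s = T^{-1}(t)$.

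I do not anticipate a real obstacle: the entire content of the lemma reduces to the verification that the natural candidate for a singularity in the $s$-derivatives, namely $1 + t L_1(s)$, stays strictly positive at $s = T^{-1}(t)$, and this is built into the defining relation \eqref{wp10a} for $T$. If desired, one could equivalently obtain the right-derivatives at $s = T^{-1}(t)$ by specialising $\tau(s)$ (already computed in the proof of Lemma~\ref{lewp3}) at $s = T^{-1}(t)$ for $\partial_s^+ \Sigma(t, T^{-1}(t))$, and analogously for $\partial_s^+ \ell(t, T^{-1}(t))$.
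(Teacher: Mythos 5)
Your argument is correct and essentially the same as the paper's: compute $\partial_s\ell$ from \eqref{wp8} and use \eqref{wp13} for $\partial_s\Sigma$, observe that since $T^{-1}(t)>0$ and $1+tL_1>0$ there (via \eqref{wp10a}/\eqref{wp10c}) these derivatives admit finite limits as $s\to T^{-1}(t)$, and conclude $C^1$ regularity up to the endpoint. Your write-up merely makes explicit the non-vanishing of $L_1$ and $1+tL_1$ at $s=T^{-1}(t)$ and the mean-value-theorem endpoint lemma, which the paper leaves implicit.
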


\begin{proof}
Since $T^{-1}(t)>0$ by Lemma~\ref{lewp2} and $L_0, L_1\in C^1((0,\infty))$, we infer from \eqref{wp13} that $\partial_s \Sigma(t,s)$ has a limit as $s\to T^{-1}(t)$ and thus that $\Sigma(t,\cdot)\in C^1([T^{-1}(t),\infty)$. Similarly, by \eqref{wp8}, 
$$
\partial_s \ell(t,s) = L_0'(s) - kt\ \frac{L_1'(s)}{1+t L_1(s)}\ ,
$$
which has a limit as $s\to T^{-1}(t)$ as $L_0$ and $L_1$ belong to $C^1((0,\infty))$.
\end{proof}

\subsection{Existence of a solution to \eqref{rep3}-\eqref{rep4}}\label{sec32}

After this preparation, we are in a position to show the existence of a solution to \eqref{rep3}-\eqref{rep4}. As expected from the analysis performed in Section~\ref{sec2}, we set 
\begin{equation}
L(t,s) := \ell(t,\zeta(t,s))\ , \quad (t,s)\in (0,\infty)^2\ , \label{wp102}
\end{equation}
and aim at showing that $L$ solves \eqref{rep3}-\eqref{rep4} as well as identifying $L(t,0)$ for all $t>0$. 

On the one hand, the properties of $\zeta$, \eqref{wp8}, \eqref{wp10d}, and \eqref{wp102} entail that, for $t>0$,
\begin{align*}
L(t,0) = &\ \lim_{s\to 0} L(t,s) = \lim_{s\to 0} \ell(t,\zeta(t,s)) = \lim_{s\to T^{-1}(t)} \ell(t,s) \\
= &\ L_0(T^{-1}(t)) - k \ln{(1+t L_1(T^{-1}(t)))} =1\ ,
\end{align*}
\begin{equation}
L(t,0) = 1\ , \qquad t>0\ . \label{wp103}
\end{equation}

On the other hand, it follows from \eqref{wp1}, \eqref{wp2}, \eqref{wp101}, and \eqref{wp102} that, for $(t,s)\in (0,\infty)^2$,
\begin{align*}
\partial_t L(t,s) = & k\ \frac{1 - L(t,s)}{s} + \partial_s \ell(t,\zeta(t,s))\ \partial_t\zeta(t,s) \\
= & k\ \frac{1 - L(t,s)}{s} - \partial_s \ell(t,\zeta(t,s))\ \frac{\partial_t\Sigma(t,\zeta(t,s))}{\partial_s\Sigma(t,\zeta(t,s))} \\
= & k\ \frac{1 - L(t,s)}{s} - \partial_s L(t,s)\ (L(t,s)-k-1)\ ,
\end{align*}
whence, thanks to \eqref{wp103},
\begin{equation*}
\partial_t L(t,s) = \partial_s L(t,s)\ (L(t,0)+k-L(t,s)) + k\ \frac{L(t,0) - L(t,s)}{s}\ , \quad (t,s)\in (0,\infty)^2\ . 
\end{equation*}
Finally, the properties of $\Sigma$, $\zeta$, and \eqref{wp3} imply that, for $s>0$,
$$
\lim_{t\to 0} L(t,s) = \lim_{t\to 0} \ell(t,\zeta(t,s)) = \ell(0,s) = L_0(s)\ .
$$
We have thus shown that the function $L$ defined in \eqref{wp102} solves \eqref{rep3}-\eqref{rep4} and enjoys the additional property \eqref{wp103}. To obtain a solution to the coagulation-fragmentation equation \eqref{i6} it remains to show that, for all $t>0$, $L(t,\cdot)$ is the Laplace transform of a non-negative bounded measure or alternatively that it is completely monotone. This will be the aim of Section~\ref{sec4}.

\section{Complete monotonicity}\label{sec4}

\begin{proposition}\label{prcm0}
For all $t>0$, the function $L(t,\cdot)$ defined in \eqref{wp102} is completely monotone.
\end{proposition}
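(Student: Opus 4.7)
The plan is to combine the explicit characteristic representation of $L$ derived in Section~\ref{sec3} with preservation properties of the class of completely monotone functions, supplemented by an approximation argument in the initial data.

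I would first assemble several auxiliary results about completely monotone functions. The main ones are: (i) Bernstein's theorem identifying completely monotone functions on $(0,\infty)$ with Laplace transforms of non-negative measures on $[0,\infty)$; (ii) closure of the cone of completely monotone functions under pointwise convergence, products, and non-negative linear combinations; (iii) the composition $f \circ g$ of a completely monotone $f$ with a Bernstein function $g$ is completely monotone; (iv) if $g$ is completely monotone on $(0,\infty)$ with $\sup g < 1/t$ for some $t>0$, then $1/(1-tg)$ and $-\ln(1-tg)$ are completely monotone, via their power-series expansions and (ii). A specific observation following from \eqref{i9} by Fubini, using $1-e^{-\sigma x} = \sigma\int_0^x e^{-\sigma y}\,dy$, is the complete monotonicity of
\[
-L_1(\sigma) = \int_0^\infty \int_0^x e^{-\sigma y}\, dy\, \nu^{in}(dx).
\]

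The natural first attempt would use (iv) together with the explicit formula \eqref{wp8} to exhibit $\sigma \mapsto \ell(t,\sigma)$ as completely monotone on its natural domain $(T^{-1}(t),\infty)$, since $-k\ln(1+tL_1(\sigma)) = -k\ln(1 - t(-L_1(\sigma)))$ has the right structure there. However, the quantity of interest is $L(t,\cdot) = \ell(t, \zeta(t,\cdot))$, involving the inverse characteristic $\zeta(t,\cdot) = \Sigma(t,\cdot)^{-1}$, which is increasing but not in general a Bernstein function. Consequently the composition rule (iii) does not apply directly, and this is the principal obstacle.

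To overcome it, I would approximate $\nu^{in}$ weakly by a sequence $\nu^{(n),in}$ of probability measures with enough regularity that a classical existence theorem for \eqref{i7} applies (e.g., compact support and smooth density), and invoke known existence results from the coagulation-fragmentation literature to produce bona fide non-negative measure-valued solutions $\nu^{(n)}(t,\cdot)$. Their Laplace transforms solve \eqref{rep3}-\eqref{rep4} with initial datum $L_0^{(n)}$ and, by uniqueness of the characteristic solution established in Section~\ref{sec3}, coincide with the $L^{(n)}(t,\cdot)$ built from the characteristic system. By (i), each $L^{(n)}(t,\cdot)$ is completely monotone. Continuous dependence of the characteristic system \eqref{wp1}-\eqref{wp3} on its initial data, together with the pointwise convergence $L_0^{(n)} \to L_0$, yields pointwise convergence $L^{(n)}(t,s) \to L(t,s)$, and the closure property (ii) delivers the complete monotonicity of $L(t,\cdot)$.
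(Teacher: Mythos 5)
There is a genuine gap, and it sits exactly where you locate the ``principal obstacle.'' Your claim that $\zeta(t,\cdot)$ ``is increasing but not in general a Bernstein function'' is what the paper in fact disproves: writing $\Sigma(t,s)=s-\Phi(s)$ with $\Phi=\id-\Sigma(t,\cdot)$, one checks from \eqref{wp9}, \eqref{wp10c} and \eqref{wp14} that $\Phi'$ is completely monotone and $\Phi'<1$ on $(T^{-1}(t),\infty)$, so that $\zeta(t,\cdot)$, which solves the functional equation $s=\zeta(t,s)-\Phi(\zeta(t,s))$ in \eqref{wp16}, has a completely monotone derivative (Lemmas~\ref{lecm5} and~\ref{lecm6}; the proof of Lemma~\ref{lecm5} is the Menon--Pego iteration $\sigma_{n+1}=s+g(\sigma_n)$, with complete monotonicity preserved at each step and in the locally uniform limit). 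Since $\zeta(t,\cdot)\ge T^{-1}(t)>0$, the composition rule you list as (iii) then applies directly to $L(t,\cdot)=\ell(t,\zeta(t,\cdot))$. This lemma is the heart of the proposition, and your proposal replaces it by nothing of equivalent strength.

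The substitute you offer --- approximate $\nu^{in}$ by regular data and ``invoke known existence results from the coagulation-fragmentation literature'' to get measure-valued solutions $\nu^{(n)}$ whose Laplace transforms are automatically completely monotone --- is circular in the present setting. For the rate coefficients \eqref{i5} the coagulation kernel is multiplicative ($\lambda=2$), the fragmentation is exactly critical ($\gamma=\lambda-1$), and each breakup produces infinitely many fragments ($\omega=-1$ in \eqref{i4}); the introduction points out that this combination is excluded from all available existence results, even for smooth, compactly supported initial data, and producing such a solution is precisely the purpose of Proposition~\ref{prcm0}. Moreover, even granting some weak solution $\nu^{(n)}$ from elsewhere, its Laplace transform satisfies \eqref{rep3} with the \emph{unknown} quantity $L^{(n)}(t,0)$ (the mass, which could a priori decrease by gelation), so you cannot identify it with the characteristic construction of Section~\ref{sec32}, which is built under the ansatz $L(t,0)=1$; Section~\ref{sec3} constructs one solution but contains no uniqueness statement for \eqref{rep3}--\eqref{rep4} that would license this identification. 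The part of your plan that is sound --- the Fubini argument showing $-L_1$ is completely monotone (hence so is $L_1'$), giving the complete monotonicity of $\ell(t,\cdot)$ on $(T^{-1}(t),\infty)$ via \eqref{wp8} and your item (iv) --- recovers Lemma~\ref{lecm4}, but without the Bernstein property of $\zeta(t,\cdot)$ the conclusion does not follow.
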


We first recall two important criteria guaranteeing complete monotonicity, see \cite[Chapter~XIII.4, Criterion~1 \& Criterion~2]{Fe71} for instance. 

\begin{lemma}\label{lecm1}
\begin{enumerate}
\item If $\varphi$ and $\psi$ are completely monotone functions then their product $\varphi \psi$ is also a completely monotone function.
\item If $\varphi$ is a completely monotone function and $\psi$ is a non-negative function with a completely monotone derivative, then $\varphi \circ \psi$ is a completely monotone function.
\end{enumerate}
\end{lemma}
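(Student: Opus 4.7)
The plan is to verify, in each case, the definition of complete monotonicity, $(-1)^n f^{(n)}\ge 0$ on $(0,\infty)$ for all $n\ge 0$, by expanding the relevant $n$-th derivative and tracking signs. Smoothness of the output is clear in both cases from smoothness of the inputs, so the whole content lies in the sign of the derivatives.

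For (1), I would invoke Leibniz's rule and use $(-1)^n=(-1)^k(-1)^{n-k}$ to rewrite
\begin{equation*}
(-1)^n(\varphi\psi)^{(n)} = \sum_{k=0}^n \binom{n}{k}\bigl[(-1)^k\varphi^{(k)}\bigr]\bigl[(-1)^{n-k}\psi^{(n-k)}\bigr].
\end{equation*}
Each bracket is non-negative by the complete monotonicity of $\varphi$ and of $\psi$, and the binomial coefficients are non-negative, so the sum is non-negative. This part is essentially a one-liner.

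For (2), the natural tool is Faà di Bruno's formula applied to $h:=\varphi\circ\psi$:
\begin{equation*}
h^{(n)}(x) = \sum \frac{n!}{k_1!\cdots k_n!}\,\varphi^{(m)}(\psi(x))\prod_{j=1}^n \left(\frac{\psi^{(j)}(x)}{j!}\right)^{k_j},
\end{equation*}
where the sum runs over $(k_1,\ldots,k_n)\in\NN^n$ with $k_1+2k_2+\cdots+nk_n=n$ and $m:=k_1+\cdots+k_n$. The key algebraic step would be to split the global sign using the partition identity $n=m+\sum_{j=1}^n(j-1)k_j$, equivalently $(-1)^n=(-1)^m\prod_{j=1}^n(-1)^{(j-1)k_j}$, to obtain
\begin{equation*}
(-1)^n h^{(n)}(x) = \sum \frac{n!}{k_1!\cdots k_n!}\,\bigl[(-1)^m\varphi^{(m)}(\psi(x))\bigr]\prod_{j=1}^n \frac{\bigl[(-1)^{j-1}\psi^{(j)}(x)\bigr]^{k_j}}{(j!)^{k_j}}.
\end{equation*}
The $\varphi$-bracket is non-negative because $\varphi$ is completely monotone and $\psi(x)\ge 0$ by hypothesis; each $\psi$-bracket $(-1)^{j-1}\psi^{(j)} = (-1)^{j-1}(\psi')^{(j-1)}$ is non-negative because this is exactly the complete monotonicity of $\psi'$. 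Hence every summand is non-negative and $(-1)^nh^{(n)}\ge 0$ on $(0,\infty)$.

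The only genuine obstacle is the sign-bookkeeping in Part~(2): one must check that the partition identity $n=m+\sum_j(j-1)k_j$ distributes the overall $(-1)^n$ precisely in the way that matches the sign patterns guaranteed by complete monotonicity of $\varphi$ (paired with $\varphi^{(m)}$) and of $\psi'$ (paired with each $\psi^{(j)}$). A conceptually cleaner alternative would be via Bernstein's theorem -- representing $\varphi(s)=\int_0^\infty e^{-sx}\mu(dx)$ for a non-negative measure $\mu$, noting that a non-negative function with completely monotone derivative is a Bernstein function, and using that $e^{-x\psi(s)}$ is completely monotone in $s$ for each fixed $x\ge 0$, so that $\varphi\circ\psi(s)=\int_0^\infty e^{-x\psi(s)}\mu(dx)$ is a positive mixture of completely monotone functions. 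However, that route relies on the deeper fact that $e^{-\psi}$ is completely monotone whenever $\psi$ is Bernstein, which has not been introduced at this stage of the paper, so I would stick to the elementary Faà di Bruno computation.
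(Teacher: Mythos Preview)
Your proof is correct. The Leibniz argument for~(1) and the Fa\`a di Bruno computation for~(2) are both standard and the sign bookkeeping via the partition identity $n = m + \sum_j (j-1)k_j$ is exactly right: it pairs $(-1)^m$ with $\varphi^{(m)}$ and $(-1)^{j-1}$ with each factor $\psi^{(j)} = (\psi')^{(j-1)}$, which is precisely what the hypotheses deliver.

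The paper, however, does not prove this lemma at all: it is stated as a known fact with a reference to Feller \cite[Chapter~XIII.4, Criterion~1 \& Criterion~2]{Fe71}. So your approach is not so much different as simply \emph{present}: you give a self-contained elementary argument where the paper defers to the literature. This buys independence from the reference and makes the exposition self-contained, at the modest cost of a paragraph of combinatorics. Your alternative via Bernstein's theorem is indeed the route Feller essentially takes (Criterion~2 there is proved through the Laplace representation), so your instinct to avoid it here in favour of the direct computation is reasonable given the paper's level of development at this point. One small caveat: in~(2) you implicitly use that $\psi$ maps $(0,\infty)$ into the domain of $\varphi$; if $\varphi$ is only defined on $(0,\infty)$ and $\psi$ is merely non-negative, the composition could fail at points where $\psi$ vanishes. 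In the applications made later in the paper this is never an issue, but it is worth a word if you want the statement to stand alone.
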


In particular, the complete monotonicity of $r\mapsto -\ln{r}$ and Lemma~\ref{lecm1}~(2) have the following consequence.

\begin{lemma}\label{lecm2}
Let $I$ be an open interval of $\RR$ and $g: I \mapsto (0,1)$ be a $C^\infty$-smooth function such that $g'$ is completely monotone. Then $-\ln{g}$ is completely monotone. 
\end{lemma}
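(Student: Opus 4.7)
The plan is to invoke Lemma~\ref{lecm1}(2) directly, taking $\varphi(r) := -\ln r$ and $\psi := g$, so that $\varphi \circ \psi = -\ln g$ is the function we want to be completely monotone. The work reduces to checking that both $\varphi$ and $\psi$ satisfy the hypotheses of that lemma on the relevant intervals.

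First I would verify that $\varphi(r) = -\ln r$ is completely monotone on $(0,1)$, which contains the range of $g$. A direct computation gives $\varphi^{(n)}(r) = (-1)^n (n-1)!\,r^{-n}$ for $n \ge 1$, whence $(-1)^n \varphi^{(n)}(r) = (n-1)!\, r^{-n} > 0$; and $\varphi(r) = -\ln r > 0$ for $r \in (0,1)$. Hence the sign conditions $(-1)^n \varphi^{(n)} \ge 0$ for all $n \ge 0$ hold on the range of $g$. Next, the hypotheses on $\psi = g$ are immediate from the statement: $g$ is $C^\infty$-smooth, non-negative (even strictly positive), and $g'$ is completely monotone by assumption. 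Applying Lemma~\ref{lecm1}(2) then yields that $-\ln g$ is completely monotone on $I$.

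There is no substantial obstacle here; the only subtlety worth flagging is that $r \mapsto -\ln r$ fails to be completely monotone on the whole of $(0,\infty)$, since it takes negative values for $r > 1$. This is precisely why the statement requires $g$ to be valued in $(0,1)$ rather than merely in $(0,\infty)$: this constraint places the range of $\psi$ inside the interval where $\varphi$ does satisfy all sign conditions, which is exactly what is needed for the composition rule of Lemma~\ref{lecm1}(2) to deliver the conclusion.
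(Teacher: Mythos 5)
Your proof is correct and follows exactly the route the paper intends: the paper introduces Lemma~\ref{lecm2} precisely as the consequence of the complete monotonicity of $r\mapsto-\ln r$ (on the range $(0,1)$ of $g$) combined with the composition criterion of Lemma~\ref{lecm1}~(2). Your additional remark about why $g$ must take values in $(0,1)$ is accurate and matches the role this hypothesis plays in the paper.
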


Let $t>0$. Recalling that $L(t,\cdot)$ is given by
$$
L(t,s) = \ell(t,\zeta(t,s))\ , \qquad s>0\ ,
$$
the proof of its complete monotonicity is performed in two steps. More precisely, we prove that $\ell(t,\cdot)$ and the derivative of $\zeta(t,\cdot)$ are completely monotone. The complete monotonicity of $L(t,\cdot)$ is then a consequence of Lemma~\ref{lecm1}~(2) and the non-negativity of $\zeta(t,\cdot)$. To prove the complete monotonicity of $\ell(t,\cdot)$ we need the following result:

\begin{lemma}\label{lecm3}
Let $g: (0,\infty) \mapsto (0,1)$ be a completely monotone function satisfying $g(0)=1$. Setting $G(x) := (g(x)-1)/x$ for $x\in (0,\infty)$, its derivative $G'$ is completely monotone.
\end{lemma}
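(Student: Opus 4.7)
I would prove this via Bernstein's theorem, by exhibiting $G'$ as the Laplace transform of a non-negative measure on $(0,\infty)$.

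First, since $g:(0,\infty)\to(0,1)$ is completely monotone with $g(0)=1$, Bernstein's theorem supplies a probability measure $\mu$ on $[0,\infty)$ with $g(s)=\int_0^\infty e^{-sx}\,d\mu(x)$ for $s\ge 0$. Using $g(0)=\mu([0,\infty))=1$ and the identity $(1-e^{-sx})/s=\int_0^x e^{-sy}\,dy$, I would write
\[
-G(s) \;=\; \frac{1-g(s)}{s} \;=\; \int_0^\infty \frac{1-e^{-sx}}{s}\,d\mu(x) \;=\; \int_0^\infty\!\!\int_0^x e^{-sy}\,dy\,d\mu(x),
\]
and then invoke Tonelli's theorem (the integrand is non-negative) to swap the order of integration, yielding
\[
-G(s) \;=\; \int_0^\infty e^{-sy}\,\bar\mu(y)\,dy, \qquad \bar\mu(y):=\mu((y,\infty))\in[0,1].
\]

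Second, since $\bar\mu$ is bounded by $1$ and $y\mapsto y^n e^{-sy}$ is integrable on $(0,\infty)$ for every $s>0$, differentiation under the integral sign is justified and gives
\[
G'(s) \;=\; \int_0^\infty y\,e^{-sy}\,\bar\mu(y)\,dy, \qquad s>0,
\]
exhibiting $G'$ as the Laplace transform of the non-negative measure with density $y\,\bar\mu(y)$ on $(0,\infty)$. Iterating,
\[
(-1)^n (G')^{(n)}(s) \;=\; \int_0^\infty y^{n+1}\,e^{-sy}\,\bar\mu(y)\,dy \;\ge\; 0
\]
for every $n\ge 0$, which is the complete monotonicity of $G'$.

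No step is genuinely hard; the only routine verifications are the Tonelli swap and the differentiation under the integral sign, both covered by the decay of $e^{-sy}$ for $s>0$ together with $\bar\mu\le 1$. The key conceptual point is simply recognizing $-G$ as the Laplace transform of the tail function $\bar\mu$ of the Bernstein measure representing $g$; the single additional $y$ produced by one differentiation then suffices to convert the signed representation of $G$ into a bona fide Laplace transform of a positive measure, which is exactly what is needed for $G'$.
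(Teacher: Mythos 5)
Your proof is correct, but it follows a genuinely different route from the paper. You invoke Bernstein's theorem to represent $g$ as the Laplace transform of a probability measure $\mu$, rewrite $-G(s)=(1-g(s))/s$ via Tonelli as the Laplace transform of the tail function $\bar\mu(y)=\mu((y,\infty))$, and then read off the complete monotonicity of $G'$ by differentiating under the integral sign; all the routine justifications you mention (Tonelli on a non-negative integrand, domination of $y^{n+1}e^{-sy}\bar\mu(y)$ by $y^{n+1}e^{-s_0y}$ for $s\ge s_0>0$) do go through, and the limit $g(0+)=\mu([0,\infty))=1$ follows by monotone convergence, so the measure is indeed a probability measure. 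The paper instead argues entirely at the level of derivatives: it proves by induction the closed formula
\begin{equation*}
G^{(n)}(x)=\frac{n!}{x^{n+1}}\Bigl[(-1)^n\,(g(x)-1)+\sum_{j=1}^n(-1)^{n-j}\,\frac{x^j}{j!}\,g^{(j)}(x)\Bigr],
\end{equation*}
then sets $h_n(x):=x^{n+1}G^{(n)}(x)/n!$, observes $h_n(0)=0$ and $h_n'(x)=x^n g^{(n+1)}(x)/n!$, and concludes the sign of $G^{(n)}$ from the sign of $g^{(n+1)}$. Your argument is shorter and more conceptual, yields the explicit representation of $-G$ as the Laplace transform of $\bar\mu$ (hence slightly more information), and is consistent with the paper's toolbox, since Feller's form of Bernstein's theorem is already cited elsewhere in the paper; the paper's proof, by contrast, is purely elementary and self-contained, needing no integral representation at the price of a two-step induction with a cleverly chosen auxiliary function.
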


\begin{proof}
We first note that
\begin{equation}
G'(x) = \frac{x g'(x) - g(x) + 1}{x^2} = \frac{g'(x) - G(x)}{x}\,, \qquad x\in (0,\infty)\,. \label{cm1}
\end{equation}

\noindent\textbf{Step~1:} We first prove by induction that, for $n\ge 0$,  
\begin{equation}
G^{(n+1)}(x) = \frac{g^{(n+1)}(x) - (n+1) G^{(n)}(x)}{x}\,, \qquad x\in (0,\infty)\,, \label{cm2}
\end{equation}
with the obvious notation $G^{(0)}=G$. Indeed, \eqref{cm2} is clearly true for $n=0$ by \eqref{cm1}. Assume next that \eqref{cm2} is true for some $n\ge 0$. Differentiating the corresponding identity gives, for $x>0$,
\begin{equation*}
G^{(n+2)}(x) = \frac{g^{(n+2)}(x) - (n+1) G^{(n+1)}(x)}{x} -  \frac{g^{(n+1)}(x) - (n+1) G^{(n)}(x)}{x^2}
\end{equation*}
We next use \eqref{cm2} for $n$ and find
\begin{equation*}
G^{(n+2)}(x) = \frac{g^{(n+2)}(x) - (n+1) G^{(n+1)}(x)}{x} -  \frac{G^{(n+1)}(x)}{x}\,,
\end{equation*}
whence \eqref{cm2} for $n+1$.

\noindent\textbf{Step~2:} We next prove by induction that, for $n\ge 1$, 
\begin{equation}
G^{(n)}(x) = \frac{n!}{x^{n+1}}\ \left[ (-1)^n\ (g(x)-1) + \sum_{j=1}^n (-1)^{n-j}\ \frac{x^j}{j!}\ g^{(j)}(x) \right]\,, \quad x\in (0,\infty)\,. \label{cm3}
\end{equation}
Indeed, the identity \eqref{cm3} is true for $n=1$ by \eqref{cm1}. Assume next that $G^{(n)}$ is given by \eqref{cm3} for some $n\ge 1$. It then follows from \eqref{cm2} and \eqref{cm3} that, for $x>0$,
\begin{align*}
G^{(n+1)}(x) = & \frac{g^{(n+1)}(x)}{x} - \frac{(n+1)!}{x^{n+2}}\ \left[ (-1)^n\ (g(x)-1) + \sum_{j=1}^n (-1)^{n-j}\ \frac{x^j}{j!}\ g^{(j)}(x) \right] \\
= & \frac{(n+1)!}{x^{n+2}}\ \left[ (-1)^{n+1}\ (g(x)-1) + \sum_{j=1}^n (-1)^{n+1-j}\ \frac{x^j}{j!}\ g^{(j)}(x) + \frac{x^{n+1}}{(n+1)!}\ g^{(n+1)}(x) \right]\,.
\end{align*}
Thus, $G^{(n+1)}$ is also given by \eqref{cm3}.

\noindent\textbf{Step~3:} Define $h_n(x) := x^{n+1}\ G^{(n)}(x) / n!$ for $n\ge 0$ and $x>0$. Thanks to \eqref{cm3}, 
$$
h_n(x) = (-1)^n\ (g(x)-1) + \sum_{j=1}^n (-1)^{n-j}\ \frac{x^j}{j!}\ g^{(j)}(x)\,, \quad x\in (0,\infty)\,. 
$$
Since $g(0)=1$, we have $h_n(0)=0$ and, for $x>0$, 
\begin{align*}
h_n'(x) = & (-1)^n\ g'(x) + \sum_{j=1}^n (-1)^{n-j}\ \frac{x^j}{j!}\ g^{(j+1)}(x) + \sum_{j=1}^n (-1)^{n-j}\ \frac{x^{j-1}}{(j-1)!}\ g^{(j)}(x) \\
= & \sum_{j=1}^{n+1} (-1)^{n+1-j}\ \frac{x^{j-1}}{(j-1)!}\ g^{(j)}(x) + \sum_{j=1}^n (-1)^{n-j}\ \frac{x^{j-1}}{(j-1)!}\ g^{(j)}(x) \\
= & \frac{x^n}{n!}\ g^{(n+1)}(x)\,.
\end{align*}
Since $g$ is completely monotone, the previous identity implies that $(-1)^{n+1}\ h_n' \ge 0$. Recalling that $h_n(0)=0$, we have thus shown that $(-1)^{n+1}\ h_n\ge 0$ which in turn gives $(-1)^{n+1}\ G^{(n)}\ge 0$ and the complete monotonicity of $G'$.
\end{proof}

\begin{lemma}\label{lecm4}
For each $t>0$, $\ell(t,\cdot)$ is completely monotone in $(T^{-1}(t),\infty)$. 
\end{lemma}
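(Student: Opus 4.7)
The plan is to read the explicit formula $\ell(t,s) = L_0(s) - k\ln(1+tL_1(s))$ from \eqref{wp8} and exhibit $\ell(t,\cdot)$ as a sum of two completely monotone functions on $(T^{-1}(t),\infty)$. The term $L_0$ is the Laplace transform of the probability measure $\nu^{in}$, hence completely monotone on $(0,\infty)$, so the work concentrates on the second summand $s\mapsto -k\ln(1+tL_1(s))$, which I would handle via Lemma~\ref{lecm2}.

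Setting $g(s):=1+tL_1(s)$, the plan is to verify the three hypotheses of Lemma~\ref{lecm2} on the interval $I=(T^{-1}(t),\infty)$: namely that $g$ is $C^\infty$-smooth, that $g$ takes values in $(0,1)$, and that $g'$ is completely monotone. Smoothness follows at once from \eqref{wp0c} and the smoothness of $L_0$. For the range, note that $L_1(s)<0$ by \eqref{wp0c}, so $g(s)<1$ for all $s>0$; on the other hand $s>T^{-1}(t)$ means $t<T(s)$, and the upper bound \eqref{wp10b} gives
\begin{equation*}
t<T(s)<\frac{s}{1-L_0(s)}=-\frac{1}{L_1(s)},
\end{equation*}
so that $1+tL_1(s)>0$, i.e.\ $g(s)\in(0,1)$. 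For the complete monotonicity of $g'(s)=tL_1'(s)$, the plan is to apply Lemma~\ref{lecm3} to the function $L_0$: since $\nu^{in}$ is a probability measure, $L_0$ is completely monotone on $(0,\infty)$ with $L_0(0)=1$ and takes values in $(0,1)$ by \eqref{wp0a}, so Lemma~\ref{lecm3} applied with $g\leftarrow L_0$ delivers that $L_1'$ is completely monotone. Multiplication by the positive constant $t$ preserves this property.

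With all hypotheses of Lemma~\ref{lecm2} verified, I conclude that $s\mapsto-\ln(1+tL_1(s))$ is completely monotone on $(T^{-1}(t),\infty)$. Multiplying by $k>0$ preserves complete monotonicity, and adding the completely monotone $L_0$ yields the complete monotonicity of $\ell(t,\cdot)$ on $(T^{-1}(t),\infty)$, as required.

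The main point to double-check is the domain restriction: the argument of the logarithm would vanish precisely at $s=T^{-1}(t)$ (by \eqref{wp10a}, since $1+T(s)L_1(s)=e^{(L_0(s)-1)/k}$ shows $g(T^{-1}(t))=e^{-1/k}\in(0,1)$, which is in fact fine, but one should observe that the positivity $g>0$ breaks down for $s$ below $T^{-1}(t)$). This is the only real obstacle and it is handled by the sharp bound \eqref{wp10b}; once this is in hand the rest is a clean application of the two previous lemmas.
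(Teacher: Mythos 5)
Your proof is correct and follows essentially the same route as the paper's: the explicit formula \eqref{wp8}, complete monotonicity of $L_1'$ via Lemma~\ref{lecm3} applied to $L_0$, positivity of $1+tL_1$ on $(T^{-1}(t),\infty)$ (the paper cites \eqref{wp10c}, you derive it from \eqref{wp10b}, which is the same estimate), and then Lemma~\ref{lecm2}. The only slip is in your closing aside: at $s=T^{-1}(t)$ the argument of the logarithm equals $e^{(L_0(T^{-1}(t))-1)/k}$, which is neither zero nor $e^{-1/k}$, but this remark plays no role in the argument.
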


\begin{proof}
Fix $t>0$ and recall that $\ell(t,\cdot)$ is given by
$$
\ell(t,s) = L_0(s) - k \ln{( 1 + t L_1(s))}\ , \quad s\in (T^{-1}(t),1)\ ,
$$
with $L_1(s) := (L_0(s)-1)/s$ for $s>0$. Since $L_0$ is completely monotone with $L_0(0)=1$, we infer from Lemma~\ref{lecm3} that the derivative of the function $L_1$ is completely monotone in $(0,\infty)$. Then, so is the derivative of $1+tL_1$ and $s\mapsto 1+tL_1(s)$ ranges in $(0,1)$ when $s\in (T^{-1}(t),\infty)$ according to \eqref{wp10c}. We are thus in a position to apply Lemma~\ref{lecm2} and conclude that $s\mapsto - \ln{( 1 + tL_1(s))}$ is completely monotone in $(T^{-1}(t),\infty)$. Since $k>0$ and $L_0$ is completely monotone in $(0,\infty)$, we conclude that $\ell(t,\cdot)$ is completely monotone in $(T^{-1}(t),\infty)$.
\end{proof}

We next turn to $\zeta(t,\cdot)$ and first establish the following auxiliary result.

\begin{lemma}\label{lecm5}
Let $g$ be a non-negative function in $C^\infty(0,\infty)$ such that $g'$ is completely monotone and $g'<1$. Then the function $(\id-g)^{-1}$ has a completely monotone derivative.
\end{lemma}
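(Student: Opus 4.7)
Set $h := \id - g$, so that $h' = 1 - g' > 0$ by hypothesis; hence $h$ is strictly increasing and smooth, with a smooth inverse $F := h^{-1}$. The plan is to establish the complete monotonicity of $F'$ by a sign-tracking induction on the order of derivative, combining the Leibniz rule with Faà di Bruno's formula.

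First, I would differentiate $h(F(y))=y$ once to get the basic identity
\begin{equation*}
(1 - g'(F(y)))\, F'(y) = 1,
\end{equation*}
and then apply the Leibniz rule $n$ times to this product identity. Using that the $k$-th derivative (for $k \geq 1$) of $1 - g'(F)$ equals $-(g' \circ F)^{(k)}$, this yields the recursion
\begin{equation*}
F^{(n+1)}(y) = F'(y)\,\sum_{k=1}^{n}\binom{n}{k}\,(g' \circ F)^{(k)}(y)\, F^{(n-k+1)}(y), \qquad n \geq 1.
\end{equation*}
Faà di Bruno's formula in turn expresses $(g' \circ F)^{(k)}$ as a sum, with non-negative integer coefficients, over set partitions $\pi = \{B_1, \ldots, B_m\}$ of $\{1, \ldots, k\}$, of the terms $(g')^{(m)}(F) \prod_{i=1}^{m} F^{(|B_i|)}$.

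Second, I would prove by induction on $n \geq 0$ that $(-1)^n F^{(n+1)} \geq 0$, which is precisely complete monotonicity of $F'$. The base case $n=0$ reads $F' = 1/(1 - g'(F)) > 0$. For the inductive step, the complete monotonicity of $g'$ gives that $(g')^{(m)}$ has sign $(-1)^m$, while the inductive hypothesis applied to each $F^{(|B_i|)}$ (valid since $|B_i| \leq k \leq n$) gives it sign $(-1)^{|B_i|-1}$, so $\prod_{i=1}^m F^{(|B_i|)}$ has sign $(-1)^{k-m}$ (using $\sum_i |B_i| = k$). Each Faà di Bruno term therefore has sign $(-1)^m \cdot (-1)^{k-m} = (-1)^k$; multiplying by the positive $F'$, the positive binomial coefficient, and $F^{(n-k+1)}$ of sign $(-1)^{n-k}$, every summand in the recursion carries the uniform sign $(-1)^n$, which closes the induction.

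The main subtlety lies in the sign bookkeeping; the key point is that we are given complete monotonicity of $g'$ rather than of $g$, so $(g')^{(m)}$ contributes $(-1)^m$ rather than $(-1)^{m-1}$, and it is precisely this extra factor of $-1$ that combines with the $(-1)^{k-m}$ coming from the $F^{(|B_i|)}$'s to produce the uniform sign $(-1)^k$ across all Faà di Bruno terms. A quick verification at low orders (yielding $F'' = g''(F)(F')^3$ and $F''' = g'''(F)(F')^4 + 3(g''(F))^2(F')^5$, both of the expected sign) confirms the pattern before running the general induction.
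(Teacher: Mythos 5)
Your proof is correct, but it takes a genuinely different route from the paper's. You work directly on the inverse $F=(\id-g)^{-1}$: differentiating $(1-g'(F(y)))\,F'(y)=1$ with the Leibniz rule and expanding $(g'\circ F)^{(k)}$ via the set-partition form of Fa\`a di Bruno, you close a sign induction $(-1)^nF^{(n+1)}\ge 0$; the bookkeeping checks out (each partition term carries the sign $(-1)^m\cdot(-1)^{k-m}=(-1)^k$, every derivative order you invoke, namely $|B_i|\le k\le n$ and $n-k+1\le n$, is covered by the inductive hypothesis, and the base case uses $0\le g'<1$), and your low-order formulas for $F''$ and $F'''$ are the right sanity check. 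The paper instead follows the Menon--Pego outline: it defines the iteration $\sigma_0=\id$, $\sigma_{n+1}(s)=s+g(\sigma_n(s))$, shows by induction, using Feller's product and composition criteria, that each iterate is completely monotone (in the sense that its derivative is), proves uniform convergence of $\sigma_n$ to $(\id-g)^{-1}$ on compact sets via a contraction estimate based on $g'\le\delta<1$, and then appeals to Feller's limit theorems for completely monotone functions to pass to the limit. Your argument buys self-containedness: no approximation scheme, no contraction estimate, and no reliance on the Bernstein-representation limit theorems, only an elementary (if combinatorially heavier) induction. The paper's argument avoids the Fa\`a di Bruno combinatorics at the price of the auxiliary convergence step and the cited theorems from Feller. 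Both proofs take the existence and smoothness of $(\id-g)^{-1}$ on the relevant interval as given, so you are on equal footing on that point.
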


\begin{proof}
For the sake of completeness, we give a sketch of the proof which is actually outlined in \cite[p.~1209]{MP04}. We set $\sigma_\infty := (\id - g)^{-1}$ and define by induction a sequence of functions $(\sigma_n)_{n\ge 0}$ as follows: 
\begin{equation}
\sigma_0(s) := s \;\;\text{ and }\;\; \sigma_{n+1}(s) := s + g(\sigma_n(s))\,, \qquad s>0\,, \quad n\ge 0\,. \label{cm40}
\end{equation}
On the one hand, thanks to the properties of $g$ (and in particular the bounds $0\le g'<1$), the function $\sigma_n$ is well-defined and non-negative for all $n\ge 0$ and belongs to $C^\infty(0,\infty)$. In addition, it satisfies 
\begin{equation}
\sigma_0(s) = s \le \sigma_n(s) \le \sigma_{n+1}(s) \le \sigma_\infty(s)\,, \qquad s>0\,, \quad n\ge 0\,. \label{cm4}
\end{equation}
Now, fix $s_1>0$ and $s_2>s_1$. Owing to the property $g'<1$, there is $\delta\in (0,1)$ such that $g'(s)\le \delta$ for each $s\in [s_1,\sigma_\infty(s_2)]$. Since $\sigma_n(s)\in [s_1,\sigma_\infty(s_2)]$ for $n\ge 0$ and $s\in [s_1,s_2]$ by \eqref{cm4}, we obtain
$$
0 \le \sigma_\infty(s) - \sigma_{n+1}(s) = g(\sigma_\infty(s)) - g(\sigma_{n}(s)) \le \delta\ \left( \sigma_\infty(s) - \sigma_{n}(s)\right)\,.
$$
This estimate readily implies that 
\begin{equation}
(\sigma_n)_{n\ge 0} \;\text{ converges uniformly towards }\; \sigma_\infty \;\text{ on compact subsets of }\; (0,\infty)\,.
\label{cm5}
\end{equation}

On the other hand, it follows from \eqref{cm40} by induction that \begin{equation}
\sigma_n \;\text{ is completely monotone for every }\; n\ge 0\,.
\label{cm6}
\end{equation} 
Indeed, $\sigma_0=\id$ is clearly completely monotone and, if $\sigma_n$ is assumed to be completely monotone, the complete monotonicity of $\sigma_{n+1}$ follows from \eqref{cm40} and that of $\sigma_n$ and $g'$ with the help of \cite[Chapter~XIII.4, Criterion~2]{Fe71}.

The assertion of Lemma~\ref{lecm5} then follows from \eqref{cm5} and \eqref{cm6} by \cite[Chapter~XIII.1, Theorem~2 \& Chapter~XIII.4, Theorem~1]{Fe71}.
\end{proof}

\begin{lemma}\label{lecm6}
For each $t>0$, $\partial_s \zeta(t,\cdot)$ is completely monotone in $(0,\infty)$.
\end{lemma}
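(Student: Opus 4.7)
The plan is to apply Lemma~\ref{lecm5} to a suitable translate of $\Sigma(t,\cdot)$. Fix $t>0$. Since $\zeta(t,\cdot)$ is the inverse of $\Sigma(t,\cdot)$ but the latter is defined on $(T^{-1}(t),\infty)$ rather than $(0,\infty)$, I first shift and set
\begin{equation*}
g(s) := s - \Sigma(t, s + T^{-1}(t))\ , \qquad s>0\ .
\end{equation*}
By Lemmas~\ref{lewp1b} and~\ref{lewp3}, $g \in C^\infty((0,\infty))$ with $g(0^+)=0$, and the identity $\Sigma(t,\zeta(t,\cdot))=\id$ implies $(\id-g)^{-1}(s) = \zeta(t,s) - T^{-1}(t)$, so that $((\id-g)^{-1})' = \partial_s \zeta(t,\cdot)$ and it suffices to verify the hypotheses of Lemma~\ref{lecm5} for $g$.

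The bounds $0 \le g' < 1$ are easy: $g'(s) = 1 - \partial_s\Sigma(t, s + T^{-1}(t))$, so $g'<1$ comes from Lemma~\ref{lewp3}, while $g' \ge 0$ follows by inspection of \eqref{wp13} using the elementary inequality $\ln(1+u)\le u$ for $u>-1$ together with the signs $L_0'<0$ and $L_1'>0$ from \eqref{wp0a}--\eqref{wp0c}.

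The delicate step --- and the main obstacle --- is proving that $g'$ is completely monotone on $(0,\infty)$. My plan is to rewrite \eqref{wp13} as a convergent series of manifestly completely monotone pieces. Introducing $M(s) := -L_1(s) = (1-L_0(s))/s > 0$, Lemma~\ref{lecm3} applied to the completely monotone function $L_0$ (satisfying $L_0(0)=1$) yields that $L_1'$ is completely monotone, and hence $M$ itself is completely monotone on $(0,\infty)$ because $M^{(k)} = -L_1^{(k)}$ for $k\ge 1$ has exactly the alternating sign pattern required. Since $tM < 1$ on $(T^{-1}(t),\infty)$ by \eqref{wp10c}, the expansion $\ln(1-tM)+tM = -\sum_{n\ge 2}(tM)^n/n$ together with the identity $M^{n-2}M' = (M^{n-1})'/(n-1)$ recasts \eqref{wp13} in the form
\begin{equation*}
g'(s) = - t\, L_0'(\tilde s) + \sum_{n\ge 2}\frac{k t^n}{n(n-1)}\Bigl(-\bigl(M^{n-1}\bigr)'(\tilde s)\Bigr)\ , \qquad \tilde s := s + T^{-1}(t)\ .
\end{equation*}
Each summand is completely monotone: $-L_0'$ is (minus) the derivative of a completely monotone function, and $M^{n-1}$ is completely monotone as a product of completely monotone functions (Lemma~\ref{lecm1}(1)), hence so is $-(M^{n-1})'$. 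The series converges locally uniformly on $(0,\infty)$ since $tM<1$, and the classical result that pointwise limits of completely monotone functions are completely monotone (already invoked in the proof of Lemma~\ref{lecm5}) then yields complete monotonicity of $g'$. With all hypotheses verified, Lemma~\ref{lecm5} applies and provides the conclusion.
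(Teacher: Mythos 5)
Your proof is correct, and its skeleton is the same as the paper's: write $s-\Sigma(t,\cdot)$ as a perturbation of the identity, check that its derivative is completely monotone and $<1$, and invoke Lemma~\ref{lecm5} to conclude that $\partial_s\zeta(t,\cdot)$ is completely monotone. The difference lies in how the crucial complete monotonicity of that derivative is obtained. The paper works with $\Phi(s)=s-\Sigma(t,s)$ on $(T^{-1}(t),\infty)$ and proves that $\Phi'$ is completely monotone via the integral representation $\bigl(tL_1-\ln(1+tL_1)\bigr)/L_1^2=\int_0^t \theta\,(1+\theta L_1)^{-1}\,d\theta$, the complete monotonicity of $1/(1+\theta L_1)$ (composition criterion), the Miller--Samko theorem \cite{MS01} on integrals of families of completely monotone functions, and finally the product criterion with $L_1'$ and $-L_0'$. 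You instead expand $\ln(1-tM)+tM$ with $M=-L_1$ into its power series (legitimate since $tM<1$ on the relevant range by \eqref{wp10c}), rewrite \eqref{wp13} as $-tL_0'$ plus a series of terms $-(M^{n-1})'$, each completely monotone by Lemma~\ref{lecm1}~(1) once $M$ is known to be completely monotone (which your sign bookkeeping from Lemma~\ref{lecm3} gives), and close with the stability of complete monotonicity under pointwise limits already used in the proof of Lemma~\ref{lecm5}. This is a genuinely different, more elementary treatment of the key step: it avoids the external reference \cite{MS01} at the modest cost of justifying the convergence of the series, which you do. A further small improvement is the translation $g(s)=s-\Sigma(t,s+T^{-1}(t))$, which makes the hypotheses of Lemma~\ref{lecm5} (stated for functions on $(0,\infty)$ with $(\id-g)^{-1}$ defined there) apply verbatim, whereas the paper applies the lemma directly on $(T^{-1}(t),\infty)$; since $\partial_s\zeta(t,\cdot)$ is unaffected by the constant shift, your conclusion is exactly the statement of the lemma.
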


\begin{proof}
Fix $t>0$. Introducing
$$
\Phi(s) := s - \Sigma(t,s) = (1-L_0(s)) t + k\ \frac{1+t L_1(s)}{L_1(s)} \ln{(1+tL_1(s))}\ , \quad s\in (T^{-1}(t),\infty)\ ,
$$
the formula \eqref{wp9} also reads $\Sigma(t,s) = s -\Phi(s)$ for $s\in (T^{-1}(t),\infty)$ from which we deduce that
\begin{equation}
s = \zeta(t,s) - \Phi(\zeta(t,s))\ , \quad s>0\ . \label{wp16}
\end{equation}
Therefore, $\zeta(t,\cdot)$ satisfies a functional identity of the form required to apply Lemma~\ref{lecm5}. To go on, we have to show that $\Phi'$ is completely monotone. To this end, we compute $\Phi'$ and find
$$
\Phi'(s) = - t L_0'(s) + k\ \frac{L_1'(s)}{L_1(s)^2}\ \left[ t L_1(s) - \ln{(1+t L_1(s))} \right]\ , \quad \quad s\in (T^{-1}(t),\infty)\ .
$$
Given $\theta\in [0,t]$, the monotonicity of $T^{-1}$ ensures that $T^{-1}(\theta)\le T^{-1}(t)$ so that $1+\theta L_1>0$ in $(T^{-1}(t),\infty)$ by \eqref{wp10c}. In addition, $1+\theta L_1$ has a completely monotone derivative since $L_1'$ is completely monotone by Lemma~\ref{lecm3} which, together with the complete monotonicity of $z\mapsto 1/z$ and Lemma~\ref{cm1}~(2) entails the complete monotonicity of $1/(1+\theta L_1)$ in $(T^{-1}(t),\infty)$ for all $\theta\in [0,t]$. Observing that
$$
\frac{t L_1(s) - \ln{(1+t L_1(s))}}{L_1(s)^2} = \int_0^t \frac{\theta}{1+\theta L_1(s)}\ d\theta\ , \quad s\in (T^{-1}(t),\infty)\ ,
$$ 
we infer from \cite[Theorem~4]{MS01} that 
$$
\frac{t L_1 - \ln{(1+t L_1)}}{L_1^2} \;\text{ is completely monotone in }\; (T^{-1}(t),\infty)\ .
$$
Using now Lemma~\ref{cm1}~(1) along with the positivity of $k$ and the complete monotonicity of $L_1'$ and $-L_0'$ entails that $\Phi'$ is completely monotone in $(T^{-1}(t),\infty)$. Furthermore, recalling the definition of $\Phi$, there holds $\Phi'=1 - \partial_s \Sigma(t,\cdot)<1$ by \eqref{wp14}.

Summarizing, we have shown that $\Phi\in C^\infty((T^{-1}(t),\infty))$ has a completely monotone derivative $\Phi'$ satisfying $\Phi'<1$ while $\zeta(t,\cdot)$ solves \eqref{wp16}. Lemma~\ref{lecm5} then ensures that $\partial_s \zeta(t,\cdot)$ is completely monotone in $(0,\infty)$.
\end{proof}

\begin{proof}[Proof of Proposition~\ref{prcm0}]
Fix $t>0$. The complete monotonicity of $L(t,\cdot)$ is a straightforward consequence of the non-negativity of $\zeta(t,\cdot)$, Lemma~\ref{cm1}~(2), Lemma~\ref{lecm4}, and Lemma~\ref{lecm6}.
\end{proof}

\medskip

Now, for each $t>0$, $L(t,\cdot)$ is completely monotone in $(0,\infty)$ with $L(t,0)=1$ by Proposition~\ref{prcm0} and \eqref{wp103} so that it is the Laplace transform of a probability measure $\nu(t)\in \mathfrak{M}^+$ by \cite[Chapter~XIII.4, Theorem~1]{Fe71}. In addition, it follows from the time continuity of $L$ and \cite[Chapter~XIII.1, Theorem~2]{Fe71} that the map $\nu: [0,\infty)\to \mathfrak{M}^+$ is weakly continuous. We finally argue as in \cite[Section~2]{MP04} to show that $\nu$ satisfies \eqref{i10} for all $C^1$-smooth functions $\vartheta$ with compact support. An additional approximation argument allows us to extend the validity of \eqref{i10} to all continuous functions $\vartheta$ with compact support and complete the proof of the first statement of Theorem~\ref{thint0}.  

\section{Large time behavior}\label{sec5}

We now aim at investigating the behavior of $L(t,ts)$ as $t\to \infty$ for any given $s>0$. More specifically, we prove the following convergence result.

\begin{proposition}\label{prltb0}
For all $s>0$, there holds
\begin{equation}
\lim_{t\to \infty} L(t,ts) = 1 + s - k W\left( \frac{s}{k} e^{(1+s)/k} \right)\ , \label{ltb0}
\end{equation}
where we recall that $W$ is the Lambert $W$-function, that is, the inverse function of $z\mapsto z e^z$ in $(0,\infty)$.
\end{proposition}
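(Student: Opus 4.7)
The plan is to combine the representation $L(t,s) = \ell(t,\zeta(t,s))$ with the explicit formulas \eqref{wp8}--\eqref{wp9} to reduce \eqref{ltb0} to a scalar asymptotic problem. Fix $s>0$ and set
\begin{equation*}
\xi_t := \zeta(t,ts),\qquad \mu_t := 1-L_0(\xi_t),\qquad \beta_t := 1 + tL_1(\xi_t),\qquad \lambda_t := -k\ln\beta_t.
\end{equation*}
Since $L_1(\xi_t) = -\mu_t/\xi_t$, one may rewrite \eqref{wp9} as $\Sigma(t,\xi_t) = (\beta_t\xi_t/\mu_t)(\mu_t + k\ln\beta_t)$, and the identity $t\mu_t = (1-\beta_t)\xi_t$ (which is just the definition of $\beta_t$) eliminates $\xi_t$ from $\Sigma(t,\xi_t) = ts$. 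A short calculation reduces this equation to
\begin{equation}\label{keyltb}
s\bigl(e^{\lambda_t/k}-1\bigr) = \mu_t - \lambda_t,
\end{equation}
while \eqref{wp8} and \eqref{wp102} give $L(t,ts) = \ell(t,\xi_t) = (1-\mu_t) + \lambda_t$.

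Next I would pass to the limit $t\to\infty$ in \eqref{keyltb}. Since $\zeta(t,\cdot)$ takes values in $(T^{-1}(t),\infty)$ by Lemma~\ref{lewp3} and $T^{-1}(t)\to\infty$ by Lemma~\ref{lewp2}, one has $\xi_t\to\infty$; dominated convergence in \eqref{rep5} then yields $\mu_t\to 1$. The crux of the argument is to show that $\lambda_t$ is eventually trapped in a compact subset of $(0,1)$. First, \eqref{keyltb} forces $0<\lambda_t<\mu_t$. If $\lambda_{t_n}\to 1$ along a subsequence, the right-hand side of \eqref{keyltb} tends to $0$ while the left-hand side tends to $s(e^{1/k}-1)>0$, a contradiction; if $\lambda_{t_n}\to 0$, the left-hand side tends to $0$ while the right-hand side tends to $1$, again a contradiction. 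Any subsequential limit $\lambda_\infty$ thus lies in $(0,1)$ and satisfies $s(e^{\lambda_\infty/k}-1) = 1-\lambda_\infty$. The function $\lambda\mapsto s(e^{\lambda/k}-1)+\lambda-1$ is strictly increasing on $[0,1]$ with boundary values $-1$ and $s(e^{1/k}-1)>0$, so this equation has a unique zero $\lambda_\infty(s)\in(0,1)$. Hence $\lambda_t\to\lambda_\infty(s)$ and $L(t,ts)\to\lambda_\infty(s)$.

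It remains to identify $\lambda_\infty(s) = 1+s-kW((s/k)e^{(1+s)/k})$. Setting $u := (1+s-\lambda_\infty(s))/k > 0$, substitute $\lambda_\infty = 1+s-ku$ into the limit equation $se^{\lambda_\infty/k} = 1+s-\lambda_\infty = ku$ to obtain $(s/k)e^{(1+s)/k} = ue^u$, and conclude by the defining property of $W$. The principal difficulty is the compactness argument for $\lambda_t$ in the second paragraph; once it is in hand, everything else is routine manipulation of the explicit formulas derived in Section~\ref{sec3}.
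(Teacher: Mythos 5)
Your proposal is correct and follows essentially the same route as the paper: both evaluate the explicit formulas \eqref{wp8}--\eqref{wp9} at $\sigma=\zeta(t,ts)$, use $\Sigma(t,\zeta(t,ts))=ts$ together with $\zeta(t,ts)\to\infty$ (hence $L_0(\zeta(t,ts))\to 0$) to reduce the problem to a scalar limit equation, and then identify the limit through the Lambert $W$-function. Your quantity $\lambda_t=-k\ln\bigl(1+tL_1(\zeta(t,ts))\bigr)$ and the monotone map $\lambda\mapsto s(e^{\lambda/k}-1)+\lambda-1$ are simply a reparametrization of the paper's $1-\mu(t)$ and of the function $h$ in \eqref{ltb5}, your compactness/subsequence step playing the role of the paper's inversion of $h$.
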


\begin{proof}
We fix $s>0$ and set
\begin{equation}
\eta(t) := \zeta(t,ts) \;\;\text{ and }\;\; \mu(t) := - t L_1(\eta(t))\ , \quad t>0\ . \label{ltb1}
\end{equation}
Since $\eta(t) = \zeta(t,ts) \ge T^{-1}(t)$, Lemma~\ref{lewp2} ensures that
\begin{equation}
\lim_{t\to\infty} \eta(t) = \infty\ . \label{ltb2}
\end{equation}
Next, for $\sigma>T^{-1}(t)$, we infer from the properties of $L_0$ and \eqref{wp10b} that
$$
1 > 1 + t L_1(\sigma) > 1 + L_1(\sigma) T(\sigma) = e^{(L_0(\sigma)-1)/k}\ .
$$
Taking $\sigma = \eta(t)$ in the above estimate gives
\begin{equation}
1 > 1 - \mu(t) > e^{(L_0(\eta(t))-1)/k} > e^{-1/k}\ , \quad t>0\ . \label{ltb3}
\end{equation}
Now, we infer from \eqref{wp9} (with $\eta(t)=\zeta(t,s)$ instead of $s$) that 
\begin{align*}
ts = & \frac{\eta(t) (1+t L_1(\eta(t)))}{1-L_0(\eta(t))} \left[ 1 - L_0(\eta(t)) + k \ln{(1-\mu(t))} \right] \\
s = & \frac{1-\mu(t)}{\mu(t)} \left[ 1 - L_0(\eta(t)) + k \ln{(1-\mu(t))} \right] \\
\mu(t) s = & (1-\mu(t))\ (1 - L_0(\eta(t))) + k (1-\mu(t)) \ln{(1-\mu(t))}\ ,
\end{align*}
whence
\begin{equation}
s + (1-\mu(t))\ (1 - L_0(\eta(t))) = h(1-\mu(t))\ , \quad t>0\ , \label{ltb4}
\end{equation}
where
\begin{equation}
h(z) := (1+s) z + k z \ln{z}\ , \quad z\in (e^{-1/k},1)\ . \label{ltb5}
\end{equation}
On the one hand, it follows from \eqref{ltb2}, \eqref{ltb3}, and \eqref{ltb4} that
\begin{equation}
\lim_{t\to\infty} h(1-\mu(t)) = s\ . \label{ltb6}
\end{equation}
On the other hand, $h$ is increasing on $(e^{-1/k},1)$ with $h(e^{-1/k}) = s e^{-1/k} < s < s+1 = h(1)$, so that $h$ is a one-to-one function from $(e^{-1/k},1)$ onto $(s e^{-1/k},s+1)$. Introducing its inverse $h^{-1}$, we deduce from \eqref{ltb6}  that
\begin{equation}
\lim_{t\to\infty} \mu(t) = 1 - h^{-1}(s)\ . \label{ltb7}
\end{equation}
Recalling \eqref{ltb1} and \eqref{ltb2}, it follows from \eqref{ltb7} that
\begin{equation}
\lim_{t\to\infty} \frac{\zeta(t,ts)}{t} = \frac{1}{1 - h^{-1}(s)}\ . \label{ltb8}
\end{equation}
Finally, since 
$$
L(t,ts) = \ell(t,\zeta(t,ts)) = L_0(\zeta(t,ts)) - k \ln{(1-\mu(t))}
$$
by \eqref{wp8}, \eqref{wp102}, and \eqref{ltb1}, we infer from \eqref{ltb7}, \eqref{ltb8}, and the properties of $L_0$ that 
\begin{equation}
\lim_{t\to\infty} L(t,ts) = -k \ln{h^{-1}(s)}\ . \label{ltb9}
\end{equation}

We are left with expressing the right hand side of \eqref{ltb9} with the help of the Lambert $W$-function. To this end, we note that $h^{-1}(s)$ solves
$$
h^{-1}(s)\ \left[ (1+s) + k \ln{h^{-1}(s)} \right] = s
$$
by \eqref{ltb5} or, equivalently,
$$
W^{-1}\left( \ln{\left( e^{(1+s)/k} h^{-1}(s) \right)} \right) = \frac{s}{k}\ e^{(1+s)/k}\ .
$$
Therefore,
$$
e^{(1+s)/k} h^{-1}(s) = \exp{\left\{ W\left( \frac{s}{k}\ e^{(1+s)/k} \right) \right\}}\ , 
$$
and applying $-k\ln{}$ to both sides of the above identity leads us to \eqref{ltb0} thanks to \eqref{ltb9}.
\end{proof}

\medskip

We finally use \cite[Chapter~XIII.1, Theorem~2]{Fe71} to express the outcome of Proposition~\ref{prltb0} in terms of $\nu$ and obtain the last statement of Theorem~\ref{thint0}. 

\section{Second moment estimate}\label{sec6}

We establish in this section an interesting smoothing property of \eqref{i6}, namely that the second moment of the solution becomes instantaneously finite for positive times, even it is initially infinite. We also investigate its large time behavior and answer by the positive a conjecture of Vigil \& Ziff \cite{VZ89}. 

\begin{proposition}\label{prsme0}
For $t>0$, there holds
\begin{equation}
\partial_s L(t,0) = \frac{L_1(T^{-1}(t))}{1+ t L_1(T^{-1}(t))} \;\;\text{ and }\;\; \partial_s L(t,0) \mathop{\sim}_{t\to\infty} \frac{1-e^{1/k}}{t}\ . \label{sme0a}
\end{equation}
\end{proposition}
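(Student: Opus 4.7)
The plan is to compute $\partial_s L(t,0)$ directly from the formula $L(t,s)=\ell(t,\zeta(t,s))$ and the chain rule. Since $\zeta(t,0)=T^{-1}(t)$ (because $\Sigma(t,T^{-1}(t))=0$), and $\partial_s\zeta(t,s)=1/\partial_s\Sigma(t,\zeta(t,s))$ by \eqref{wp101}, we obtain
\begin{equation*}
\partial_s L(t,0) \;=\; \frac{\partial_s \ell\bigl(t,T^{-1}(t)\bigr)}{\partial_s \Sigma\bigl(t,T^{-1}(t)\bigr)}\,,
\end{equation*}
and we already have explicit formulas for both partial derivatives: \eqref{wp13} gives $\partial_s\Sigma$, and differentiating \eqref{wp8} with respect to $s$ yields $\partial_s\ell(t,s)=L_0'(s)-ktL_1'(s)/(1+tL_1(s))$, as already used in the proof of Lemma~\ref{lewp4}.

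The algebraic step is the core of the argument. Abbreviate $\sigma:=T^{-1}(t)$ and $u:=1+tL_1(\sigma)$, which by \eqref{wp10a} satisfies $u=e^{(L_0(\sigma)-1)/k}$ and, equivalently, $\ln u = \sigma L_1(\sigma)/k$. Using the relation $L_0'(\sigma)=L_1(\sigma)+\sigma L_1'(\sigma)$ (which follows at once from $L_0(s)=1+sL_1(s)$), I would substitute these into the expressions for $\partial_s\ell(t,\sigma)$ and $\partial_s\Sigma(t,\sigma)$ obtained above. A short computation then shows
\begin{align*}
u\,\partial_s\ell(t,\sigma) &= u L_1(\sigma) + L_1'(\sigma)\bigl(u\sigma - kt\bigr)\,, \\
L_1(\sigma)\,\partial_s\Sigma(t,\sigma) &= u L_1(\sigma) + L_1'(\sigma)\bigl(u\sigma - kt\bigr)\,,
\end{align*}
so the two quantities coincide and the ratio equals $L_1(\sigma)/u$, which is exactly the first claim of \eqref{sme0a}.

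For the asymptotics as $t\to\infty$, Lemma~\ref{lewp2} yields $T^{-1}(t)\sim t/(1-e^{-1/k})$, so in particular $T^{-1}(t)\to\infty$. Since $\nu^{in}$ is a probability measure on $(0,\infty)$, the dominated convergence theorem gives $L_0(T^{-1}(t))\to 0$, and therefore \eqref{wp10a} yields $1+tL_1(T^{-1}(t))\to e^{-1/k}$. Combining this with $L_1(T^{-1}(t))=(L_0(T^{-1}(t))-1)/T^{-1}(t)\sim -(1-e^{-1/k})/t$, we obtain
\begin{equation*}
\partial_s L(t,0) \;\sim\; \frac{-(1-e^{-1/k})/t}{e^{-1/k}} \;=\; \frac{1-e^{1/k}}{t}\qquad\text{as }t\to\infty\,,
\end{equation*}
which is the second assertion of \eqref{sme0a}. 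The only potential obstacle is the algebraic collapse in the middle paragraph, but the presence of the identity $u=e^{(L_0(\sigma)-1)/k}$ (equivalently, $\ln u=\sigma L_1(\sigma)/k$) makes the cancellation essentially forced; everything else is bookkeeping with the formulas already established in Sections~\ref{sec31} and~\ref{sec32}.
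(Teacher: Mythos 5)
Your proposal is correct and follows essentially the same route as the paper: the chain rule for $L(t,s)=\ell(t,\zeta(t,s))$ together with the formulas \eqref{wp8} and \eqref{wp13} evaluated at $\sigma=T^{-1}(t)$ (justified by Lemma~\ref{lewp4}), the identity \eqref{wp10a}/\eqref{wp10d} to collapse the algebra to $L_1(T^{-1}(t))/(1+tL_1(T^{-1}(t)))$, and then \eqref{wp11b} plus $L_0(s)\to 0$ as $s\to\infty$ for the asymptotics. The only cosmetic difference is that you pass to the limit in numerator and denominator separately rather than in the reciprocal, which changes nothing of substance.
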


Note that the positivity of $T^{-1}(t)$ for $t>0$ guaranteed by Lemma~\ref{lewp2} ensures that $L_1(T^{-1}(t))$ is finite whatever the value $t>0$. Recalling that $L_1(s)=(L_0(s)-1)/s$, we realize that $L_1(T^{-1}(t))$ however blows down as $t\to 0$ if $L_0'(0)=-\infty$, that is, $f^{in}$ has an infinite second moment.

\begin{proof}
Fix $t>0$ and set $\theta:=T^{-1}(t)$. Recalling that $L(t,s)=\ell(t,\zeta(t,s))$ by \eqref{wp102}, it follows from \eqref{wp101} that  
$$
\partial_s L(t,s) = \frac{\partial_s \ell(t,\zeta(t,s))}{\partial_s \Sigma(t,\zeta(t,s))}\ , \quad s>0\ ,
$$
while \eqref{wp8} and \eqref{wp13} give
\begin{align*}
\partial_s \ell(t,\sigma) = & L_0'(\sigma) - \frac{kt L_1'(\sigma)}{1+t L_1(\sigma)}\ , \quad \sigma>\theta\ , \\
\partial_s \Sigma(t,\sigma) = & 1 + t L_0'(\sigma) + k\ \frac{L_1'(\sigma)}{L_1(\sigma)^2} \ \left[ \ln{(1+t L_1(\sigma))} - t L_1(\sigma) \right]\ , \quad \sigma>\theta\ .
\end{align*}
Owing to Lemma~\ref{lewp4}, we may take $\sigma=\theta$ in the previous identities and use \eqref{wp10d} to obtain
$$
\partial_s \ell(t,\theta) = L_0'(\theta) - \frac{kt L_1'(\theta)}{1+t L_1(\theta)} = \frac{(1+t L_1(\theta)) L_0'(\theta) - kt L_1'(\theta)}{1+t L_1(\theta)}\ ,
$$
and
\begin{align*}
\partial_s \Sigma(t,\theta) = & 1 + t L_0'(\theta) +  \frac{L_1'(\theta)}{L_1(\theta)^2} \ \left( \theta L_1(\theta)) - kt L_1(\theta) \right) \\
= & 1 + t L_0'(\theta) +\frac{\theta L_0'(\theta) + 1 - L_0(\theta)}{L_0(\theta)-1}  - kt \frac{L_1'(\theta)}{L_1(\theta)} \\
= & \left( t + \frac{1}{L_1(\theta)} \right) L_0'(\theta) - kt \frac{L_1'(\theta)}{L_1(\theta)} \\
= & \frac{(1+t L_1(\theta)) L_0'(\theta) - kt L_1'(\theta)}{L_1(\theta)}  \ .
\end{align*}
Combining the above  formulas for $\partial_s L(t,s)$, $\partial_s \ell(t,\theta)$, and $\partial_s \Sigma(t,\theta)$, and recalling that $\zeta(t,0)=\theta$ give the first statement in \eqref{sme0a} and imply in particular that $\partial_s L(t,0)$ is finite. 

To prove the second statement in \eqref{sme0a}, we use the first one to obtain
$$
\frac{1}{|\partial_s L(t,0)|} = - t - \frac{1}{L_1(T^{-1}(t))} = T^{-1}(t) -t + \frac{T(t)^{-1} L_0(T^{-1}(t))}{1 - L_0(T^{-1}(t))}\ .
$$
Since $T^{-1}(t) \sim t/(1-e^{-1/k})$ as $t\to \infty$ by \eqref{wp11b}, it follows from the properties of $L_0$ and the above identity that
$$
\frac{1}{|\partial_s L(t,0)|} \sim \left( \frac{1}{1-e^{-1/k}} - 1 \right) t \;\;\text{ as }\;\; t\to \infty\ ,
$$
and the proof of \eqref{sme0a} is complete.
\end{proof}

\medskip

Since 
$$
\partial_s L(t,0) = - \int_0^\infty y \nu(t,dy)\ , \quad t>0\ ,
$$
the second statement \eqref{i12} of Theorem~\ref{thint0} readily follows from Proposition~\ref{prsme0}.


\section*{Acknowledgments}

Part of this work was done while PhL enjoyed the hospitality and support of the Department of Mathematical and Statistical Sciences of the  University of Alberta, Edmonton, Canada and the African Institute for Mathematical Sciences, Muizenberg, South Africa.




\begin{thebibliography}{99}

\bibitem{BC90} J.M.~Ball and J.~Carr, \textsl{The discrete
coagulation-fragmentation equations~: existence, uniqueness, and density conservation}, J. Statist. Phys. \textbf{61} (1990), 203--234.

\bibitem{BL11} J.~Banasiak and W.~Lamb, \textsl{Global strict solutions to continuous coagulation-fragmentation equations with strong fragmentation}, Proc. Roy. Soc. Edinburgh
Sect.~A \textbf{141} (2011), 465-–480.

\bibitem{BL12} J.~Banasiak and W.~Lamb, \textsl{Analytic fragmentation semigroups and continuous coagulation-fragmentation equations with unbounded rates}, J. Math. Anal. Appl. \textbf{391} (2012), 312--322.

\bibitem{Ca06} J.A.~Ca{\~ n}izo, \textsl{Some problems related to the study of interaction kernels: coagulation, fragmentation and diffusion in kinetic and quantum equations}, Tesis doctoral, Universidad de Granada, 2006.

\bibitem{dC95} F.P.~da Costa, \textsl{Existence and uniqueness of density conserving solutions to the coagulation-fragmentation equations with strong fragmentation}, J. Math. Anal. Appl. \textbf{192} (1995), 892--914.

\bibitem{Du94b} P.B.~Dubovski{\v\i}, \textsl{Mathematical Theory of Coagulation}, Lecture Notes Ser. \textbf{23}, Seoul Nat. Univ., 1994.

\bibitem{DS96b} P.B.~Dubovski{\v\i} and I.W.~Stewart, \textsl{Existence, uniqueness and mass conservation for the coagulation-fragmentation equation}, Math. Methods Appl. Sci. \textbf{19} (1996), 571--591.

\bibitem{EW01} A.~Eibeck and W.~Wagner, \textsl{Stochastic particle approximations for Smoluchowski's coagulation equation}, Ann. Appl. Probab. \textbf{11} (2001), 1137--1165.

\bibitem{ELMP03} M.~Escobedo, Ph.~Lauren{\c c}ot, S.~Mischler, and B.~Perthame, \textsl{Gelation and mass conservation in coagulation-fragmentation models}, J. Differential Equations \textbf{195} (2003), 143--174.

\bibitem{EMP02} M.~Escobedo, S.~Mischler, and B.~Perthame, \textsl{Gelation in coagulation-fragmentation models}, Comm. Math. Phys. \textbf{231} (2002), 157--188.

\bibitem{EMRR05} M.~Escobedo, S.~Mischler, and M.~Rodriguez Ricard, \textsl{On self-similarity and stationary problem for fragmentation and coagulation models}, Ann. Inst. H. Poincar\'e Anal. Non Lin\'eaire \textbf{22} (2005), 99--125.

\bibitem{Fe71} W.~Feller, \textsl{An Introduction to Probability Theory and its Applications. Vol. II}, Second edition, John Wiley \& Sons, Inc., New York-London-Sydney, 1971.

\bibitem{GKW11} A.K.~Giri, J.~Kumar, and G.~Warnecke, \textsl{The continuous coagulation equation with multiple fragmentation}, J. Math. Anal. Appl. \textbf{374} (2011), 71--87.

\bibitem{GLW12} A.K.~Giri, Ph.~Lauren{\c c}ot, and G.~Warnecke, \textit{Weak solutions to the continuous coagulation equation with multiple fragmentation}, Nonlinear Anal. \textbf{75} (2012), 2199--2208.

\bibitem{GW11} A.K.~Giri and G.~Warnecke, \textsl{Uniqueness for the coagulation-fragmentation equation with strong fragmentation}, Z. Angew. Math. Phys. \textbf{62} (2011), 1047--1063.

\bibitem{La04} W.~Lamb, \textsl{Existence and uniqueness results for the continuous coagulation and fragmentation equation}, Math. Methods Appl. Sci. \textbf{27} (2004), 703--721.

\bibitem{Lt00} Ph.~Lauren{\c c}ot, \textsl{On a class of continuous coagulation-fragmentation equations}, J. Differential Equations \textbf{167} (2000), 245--274.

\bibitem{Lt02} Ph.~Lauren{\c c}ot, \textsl{The discrete coagulation equation with multiple fragmentation}, Proc. Edinburgh Math. Soc. \textbf{45} (2002), 67--82.

\bibitem{LM04} Ph.~Lauren{\c c}ot and S.~Mischler, \textsl{On coalescence equations and related models}, in ``Modeling and Computational Methods for Kinetic Equations'', P.~Degond, L.~Pareschi, and G.~Russo (eds.), pages~321--356. Birkh\"auser, Boston, 2004.

\bibitem{Le83} F.~Leyvraz, \textsl{Existence and properties of post-gel solutions for the kinetic equations of coagulation}, J. Phys.~A \textbf{16} (1983), 2861--2873.

\bibitem{LT81} F.~Leyvraz and H.R.~Tschudi, \textsl{Singularities in the kinetics of coagulation processes}, J. Phys. A \textbf{14} (1981), 3389--3405.

\bibitem{MZ87} E.D.~McGrady and R.M.~Ziff, \textsl{``Shattering'' transition in fragmentation}, Phys. Rev. Lett. \textbf{58} (1987), 892--895.

\bibitem{MLM97} D.J.~McLaughlin, W.~Lamb and A.C.~McBride, \textsl{An existence and uniqueness result for a coagulation and
multiple-fragmentation equation}, SIAM J. Math. Anal. \textbf{28}
(1997), 1173--1190. 

\bibitem{ML62a} J.B.~McLeod, \textsl{On an infinite set of non-linear differential equations}, Quart. J. Math. Oxford Ser. (2) \textbf{13} (1962), 119--128.

\bibitem{ML62b} J.B.~McLeod, \textsl{On an infinite set of non-linear differential equations (II)}, Quart. J. Math. Oxford Ser. (2)
\textbf{13} (1962), 193--205.

\bibitem{ML64} J.B.~McLeod, \textsl{On the scalar transport equation}, Proc. London Math. Soc. (3) \textbf{14} (1964), 445--458.

\bibitem{Me57} Z.A.~Melzak, \textsl{A scalar transport equation}, 
Trans. Amer. Math. Soc. \textbf{85} (1957), 547--560.

\bibitem{MP04} G.~Menon and R.L.~Pego, \textsl{Approach to self-similarity in Smoluchowski's coagulation equations}, Comm. Pure Appl. Math. \textbf{LVII} (2004), 1197--1232.

\bibitem{MS01} K.S.~Miller and S.G.~Samko, \textsl{Completely monotonic functions}, Integral Transforms Spec. Funct. \textbf{12} (2001), 389--402.

\bibitem{NZ11} R.~Normand and L.~Zambotti, \textsl{Uniqueness of post-gelation solutions of a class of coagulation equations},  Ann. Inst. H.~Poincar\'e Anal. Non Lin\'eaire \textbf{28} (2011), 189--215. 

\bibitem{No99} J.R.~Norris, \textsl{Smoluchowski's coagulation
equation~: uniqueness, non-uniqueness and a hydrodynamic limit for the stochastic coalescent}, Ann. Appl. Probab. \textbf{9} (1999), 78--109.

\bibitem{Pi12} V.N.~Piskunov, \textsl{The asymptotic behavior and self-similar solutions for disperse systems with coagulation and fragmentation}, J. Phys.~A \textbf{45} (2012), 235001.

\bibitem{vRS01} M.~Shirvani and H.J.~van Roessel, \textsl{Some results on the coagulation equation}, Nonlinear Anal. \textbf{43} (2001), 563--573.

\bibitem{Sp84} J.L.~Spouge, \textsl{An existence theorem for the discrete coagulation-fragmentation equations}, 
Math. Proc. Cambridge Philos. Soc. \textbf{96} (1984), 351--357.

\bibitem{St89} I.W.~Stewart, \textsl{A global existence theorem for the general coagulation-fragmentation equation with unbounded kernels}, Math. Methods Appl. Sci. \textbf{11} (1989), 627--648.

\bibitem{St90b} I.W.~Stewart, \textsl{A uniqueness theorem for the coagulation-fragmentation equation}, Math. Proc. Cambridge
Philos. Soc. \textbf{107} (1990), 573--578.

\bibitem{VZ89} R.D.~Vigil and R.M.~Ziff, \textsl{On the stability of coagulation-fragmentation population balances}, J. Colloid Interface Sci. \textbf{133} (1989), 257--264.

\bibitem{Wh80} W.H.~White, \textsl{A global existence theorem for Smoluchowski's coagulation equations}, Proc. Amer. Math. Soc. \textbf{80} (1980), 273--276.

\end{thebibliography}
\end{document}